\newtheorem{lemma}{Lemma}[section]
\newtheorem{thm}[lemma]{Theorem}
\newtheorem{prop}[lemma]{Proposition}
\newtheorem{corol}[lemma]{Corollary}
\newtheorem{example}[lemma]{Example}
\title{On the poset of vector partitions}
\author{NATALIE AISBETT}
\address{School of Mathematics and Statistics\\
University of Sydney, NSW, 2006\\
Australia}
\email{N.Aisbett@maths.usyd.edu.au}
\date{}
\begin{document}
\maketitle{}
\numberwithin{figure}{section}
\numberwithin{equation}{section}

\begin{abstract}
We consider the poset of vector partitions of $[n]$ into $s$ components, denoted $\Pi_{n,s}$, which was first defined by Stanley in \cite{st}. Sagan has shown in \cite{sa} that this poset is CL-shellable, and hence has the homotopy type of a wedge of spheres of dimension $(n-2)$. We extend on this result to show that $\Pi_{n,s}$ is edge-lexicographic shellable. We then use this edge-labeling to find a recursive expression for the number of spheres, and show that when $s=1$ the number of spheres is equal to the number of complete non-ambiguous trees, first defined in \cite{abb}. \\
\end{abstract}

\section{Introduction }

For any positive integer $n$, we let $[n]$ denote the set of integers $\{1,2,\ldots,n\}$. A \emph{vector partition} of $[n]$ into $s+1$ components, or an \emph{$(s+1)$-partition of $[n]$}, is an $(s+1)$-tuple $(P,w^1,\ldots,w^s)$ where $P$ is a partition of $[n]$, and each $w^i$ is a labeling of the parts of $P$ so that a part with $\alpha$ elements is labeled by a subset of cardinality $\alpha$, and the labels form a partition of $[n]$. We define a poset $\Pi_{n,s+1}$ with elements the $(s+1)$-partitions of $[n]$ union a least element $\hat{0}$ with relation $(P,w^1,\ldots,w^s) \le (P',w^{1\prime},\ldots, w^{s\prime})$ if every part in $P'$ is the union of parts in $P$, and the corresponding label sets in $w^{i\prime}$ are the union of the label sets in $w^i$ for each $i$. The poset of \emph{vector partitions of $[n]$} into $s+1$ components, or \emph{$(s+1)$-partitions of $[n]$}, denoted $\overline{\Pi}_{n,s+1}$ is the induced subposet of $\Pi_{n,s+1}$ with the least and greatest elements removed, i.e. it is the induced subposet with elements 
$$\overline{\Pi}_{n,s+1}=\Pi_{n,s+1}-\{\hat{0}, ([n],\ldots, [n])\}.$$   
The relation $P \le P'$ where the parts of $P'$ are the union of the parts in $P$ forms the well known poset of partitions of $[n]$, denoted by $\Pi_n$.\\

The poset of vector partitions is an example of an exponential structure. Exponential structures were defined by Stanley in \cite{st}, where this particular type of exponential structure is mentioned as Example 2.5. In \cite{sa}, Sagan studies the poset of vector partitions and shows that it admits a recursive atom ordering, which is equivalent to being CL-shellable (see \cite{bw,bw3}). \\

The poset $\Pi_{n,2}$ is a member of a family of posets that includes the poset of labeled subforests defined in \cite{br}. We can see this by taking the partition $P$ in an element $(P,w^1) \in \Pi_{n,2}$ to be a partition of the vertices of $K_n$. In \cite{br}, Babson and Reiner define a poset in a similar manner to $\Pi_{n,2}$, however their underlying graph is any tree with $n$ vertices rather than $K_n$. When the underlying graph is $\mathrm{Path}_n$ the order complex of this poset is the dual simplicial complex to the permutohedron.\\

\textbf{Acknowledgements}
I would like to thank Russ Woodroofe for his helpful suggestions on how to improve this paper. As suggested by Woodroofe, I will be considering the method used in \cite{rw} for giving an edge labeling of a dual poset, which can possibly be used to find another expression for the number of spheres in $\Delta(\Pi_{n,s})$.

\section{Definitions}

This section contains a summary of the terminology used in this paper. We discuss simplicial complexes, poset theory, order complexes, and shelling orders. The reader is advised to see \cite[Wachs, Poset Topology]{gc} for a complete introduction to this theory.\\

A \emph{simplicial complex} $\Delta$ with finite vertex set $S$, is a set of subsets of $S$ such that
\begin{itemize}
\item $\{i\} \in \Delta$ for every $i \in S$, and 
\item if $I \in \Delta$, and $J \subseteq I$, then $J \in \Delta$.
\end{itemize} Elements of a simplicial complex are called \emph{faces}, and the dimension of a face $F$ is equal to $|F|-1$. Faces of dimension 0 are called \emph{vertices}, faces of dimension 1 are called \emph{edges}, and a \emph{facet} is a face that is not properly contained in any other face. The dimension of a simplicial complex is the maximal dimension of all its faces. A simplicial complex is \emph{pure} if all its facets are the same dimension. \\

A \emph{poset} is a finite set $P$ and a relation $\le$ on $P$, that satisfies: 
\begin{itemize}
\item $x \le x$ for all $x \in P$, 
\item if $x \le y$ and $y \le x$, then $x=y$,
\item if $x \le y$ and $y \le z$, then $x \le z$.
\end{itemize}
A \emph{length $m$ chain} in $P$ is a totally ordered subset $\{x_0<x_1<\cdots<x_m\}$ of $P$. A \emph{maximal chain} is a chain that is maximal with respect to inclusion. A poset $S$ is an induced subposet of a poset $P$ if $S\subseteq P$, and $x \le y$ in $S$ if and only if $x \le y$ in $P$. For any $x \le y$ in $P$, the \emph{interval} $[x,y]$ is the induced subposet of $P$ with elements all $z \in P$ such that $x \le z \le y$. The \emph{length} of an interval is the maximum of the length of all maximal chains in the interval. For a given subset $S$ of $P$, an element $x \in S$ is the \emph{greatest} (respectively \emph{least}) element of $S$ if $s \le x$ (respectively  $x \le s$) for all $s \in S$. The greatest element in a poset is denoted $\hat{1}$, and the least element is denoted $\hat{0}$. A poset if \emph{bounded} if it contains both $\hat{0}$ and $\hat{1}$. A poset $P$ is \emph{pure} if all maximal chains in $P$ have the same length. A poset is \emph{graded} if it is bounded and pure. If $P$ is a bounded poset, the \emph{proper part} of $P$, denoted by $\overline{P}$, is the induced subposet with elements $P -\{\hat{0},\hat{1}\}$. If $x$ and $y$ are elements of a poset $P$, the notation $x \lessdot y$ means that $x <y$, and there is no $z \in P$ such that $x<z<y$. If $x \lessdot y$, then we say that $x < y$ is an \emph
{edge} of $P$. An \emph{atom} of a poset $P$ with least element $\hat{0}$ is an element $a \in P$ such that $\hat{0} \lessdot a$ is an edge of $P$. Posets $P$ and $Q$ are \emph{isomorphic}, denoted $P \cong Q$, if there is a bijective map $\phi:P \rightarrow Q$ such that $\phi(x) \le \phi(y)$ in $Q$ if and only if $x \le y$ in $P$.\\

The \emph{order complex} of a poset $P$, denoted $\Delta(P)$, is the simplicial complex with vertex set the elements of $P$, and with faces given by the chains of $P$. \\

It is clear from the definition that the poset $\Pi_{n,s}$ is graded. In this paper, we will be studying topology of the order complex of the poset $\overline{\Pi}_{n,s}$. To do this we use a tool called shellability which is defined in the next paragraph.\\

If $F$ is a face of a simplicial complex $\Delta$, we denote by $\langle F \rangle$ the set of faces $G \in \Delta$ such that $G \subseteq F$. A simplicial complex is \emph{shellable} if there is a linear ordering $F_1,F_2,\ldots, F_k$ of its facets so that for all $i \in \{2,\ldots,k\}$, the set $\langle F_i \rangle \cap (\cup_{j=1}^{i-1}\langle F_j\rangle)$ is a pure $(|F_i|-2)$-dimensional simplicial complex. \\

\begin{thm}
[Bj\"{o}rner and Wachs \cite{bw}] A shellable simplicial complex has the homotopy type of a wedge of spheres, where for each $m$, then number of $m$-dimensional spheres is the number of $m$-dimensional facets $F_i$ whose boundary is contained in $\cup_{j=1}^{i-1}\langle F_j \rangle$. Such facets are called \emph{homology facets}. \label{asdfa}
\end{thm}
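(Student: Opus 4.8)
The plan is to prove this by an explicit Morse-theoretic / shelling argument: we take a shelling order $F_1,\dots,F_k$ as in the definition, and show directly that (i) any facet $F_i$ whose boundary is entirely contained in $\bigcup_{j<i}\langle F_j\rangle$ contributes a sphere of dimension $\dim F_i$, and (ii) the collection of all the others collapses onto the part already built. First I would set $\Delta_i=\bigcup_{j\le i}\langle F_j\rangle$, so that $\Delta=\Delta_k$ and $\Delta_1$ is a single simplex, and argue by induction on $i$ that $\Delta_i$ has the homotopy type of a wedge of spheres with the asserted count. The base case is trivial since a simplex is contractible. For the inductive step the shelling condition says $R_i:=\langle F_i\rangle\cap\Delta_{i-1}$ is a pure $(|F_i|-2)$-dimensional subcomplex of the boundary $\partial F_i$; it is therefore a union of some of the facets of $\partial\langle F_i\rangle$.

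The key step is the cofiber/attaching analysis. I would note that $\Delta_i$ is obtained from $\Delta_{i-1}$ by attaching the cells of $\langle F_i\rangle$ that are not in $\Delta_{i-1}$, i.e. the faces of $F_i$ containing the ``restriction face'' $r(F_i)$, the unique minimal new face. There is a standard homotopy equivalence $\Delta_i\simeq \Delta_{i-1}\cup(\text{cone on }R_i)$ up to the pair $(\langle F_i\rangle, R_i)$; since $\langle F_i\rangle$ is a simplex (contractible), collapsing it shows $\Delta_i\simeq \Delta_{i-1}/R_i$ only when $R_i$ is contractible, and in general one uses that $(\langle F_i\rangle,R_i)$ is homotopy equivalent to $(\mathrm{pt}, R_i)$ relative to nothing — more cleanly, $\Delta_i\simeq \Delta_{i-1}\cup_{R_i} C R_i$. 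Now there are exactly two cases. If $R_i=\partial F_i$ (the full boundary), then we are coning off the whole boundary sphere, which is the same as wedging on a sphere $S^{\dim F_i}$; this is precisely the homology-facet case. If $R_i\subsetneq\partial F_i$, then $R_i$ is a proper, pure, full-dimensional subcomplex of the boundary of a simplex, hence is collapsible (it is a shellable ball, being a proper union of facets of $\partial\Delta^{|F_i|-1}$), so $CR_i$ deformation retracts into $\Delta_{i-1}$ and $\Delta_i\simeq\Delta_{i-1}$, adding no new sphere. Tracking the dimensions and counts through the induction gives the statement.

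The main obstacle, and the step requiring the most care, is justifying the homotopy equivalence $\Delta_i\simeq\Delta_{i-1}\cup_{R_i}CR_i$ and the ``proper subcomplex of $\partial\Delta^d$ is collapsible'' claim with enough rigor; the latter is where shellability is genuinely used, since an arbitrary pure codimension-one subcomplex of a simplex boundary need not even be connected if we are careless, but a union of $\le d$ of the $d+1$ facets of $\partial\Delta^{d}$ is always a shellable (hence collapsible) ball. I would handle the gluing via the standard fact that if $A\subseteq B$ with $B$ contractible then $B\cup_A X\simeq X\cup_A CA$, applied with $B=\langle F_i\rangle$, $A=R_i$, $X=\Delta_{i-1}$. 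Everything else — purity of the homology spheres' dimension, and the bookkeeping that each homology facet of dimension $m$ contributes exactly one $m$-sphere — is routine once the inductive framework is in place. Since the statement is quoted as a theorem of Bj\"orner and Wachs, I would in the write-up simply cite \cite{bw} and sketch this argument, rather than reproving it in full.
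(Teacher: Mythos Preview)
The paper does not give a proof of this theorem; it is stated as a result of Bj\"orner and Wachs and simply cited to \cite{bw}. So there is no ``paper's own proof'' to compare against --- your instinct at the end of the proposal to just cite \cite{bw} is exactly what the paper does.

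Your sketch is the standard argument and is essentially correct. Two small points if you do want to flesh it out. First, the claim that a proper union of facets of $\partial\Delta^d$ is a collapsible ball has a one-line proof you might prefer: if the facet opposite a vertex $v$ is omitted, every remaining facet contains $v$, so the union is a cone with apex $v$. Second, and more substantively, the assertion that ``coning off the whole boundary sphere is the same as wedging on a sphere'' hides a step: attaching an $m$-cell to $\Delta_{i-1}$ along $\partial F_i\cong S^{m-1}$ yields $\Delta_{i-1}\vee S^m$ only if the attaching map is null-homotopic in $\Delta_{i-1}$. In the nonpure setting this is exactly where Bj\"orner and Wachs invoke their rearrangement lemma (reorder the shelling so dimensions are weakly decreasing); then by induction $\Delta_{i-1}$ is a wedge of spheres of dimension $\ge m$, hence $(m-1)$-connected, and the attaching map is forced to be trivial. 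For the pure complexes $\Delta(\overline{\Pi}_{n,s})$ treated in this paper the issue does not arise, since all spheres have the same dimension.
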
 

There are well known techniques which can be used to show that the order complex of a graded poset is shellable. The topology of the order complex of a graded poset is not interesting since it is a  contractible space. However, a shelling of the order complex of a bounded poset induces a shelling on the order complex of the proper part of this poset, as described in Proposition \ref{htgdyh} below. In this paper we use a technique called EL-shellability, which we introduce in this section, to show that $\Delta(\Pi_{n,s})$ and hence $\Delta(\overline{\Pi}_{n,s})$ is shellable.

\begin{prop}
Suppose that $P$ is a bounded poset, and that $C_1,C_2,\ldots,C_t$ is an ordering of the maximal chains of $P$ that gives a shelling of the order complex $\Delta(P)$. Then $\overline{C}_1,\overline{C}_2,\ldots,\overline{C}_t$ is an ordering of the maximal chains of $\bar{P}$ that gives a shelling of $\Delta(\overline{P})$. Here $\overline{C}_i$ is the totally ordered subset $C_i - \{\hat{0},\hat{1}\}$. \label{htgdyh}
\end{prop}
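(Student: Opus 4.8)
The plan is to use the fact that, because $P$ is bounded, every maximal chain of $P$ contains both $\hat{0}$ and $\hat{1}$, so that the map $C \mapsto \overline{C}$ is a bijection from the maximal chains of $P$ onto the maximal chains of $\overline{P}$; then to reduce the shelling condition for $\Delta(\overline{P})$ directly to the given shelling condition for $\Delta(P)$ by observing that each relevant subcomplex on the $P$-side is the join of its counterpart on the $\overline{P}$-side with the fixed $1$-simplex on $\{\hat{0},\hat{1}\}$.

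First I would check the bookkeeping on facets. If $C$ is a maximal chain of $P$, then $\hat{0},\hat{1}\in C$, since otherwise adjoining the missing bound produces a strictly larger chain; hence $|\overline{C}| = |C|-2$. Conversely, if $D$ is a maximal chain of $\overline{P}$, then $D\cup\{\hat{0},\hat{1}\}$ is a maximal chain of $P$. Since $C = \overline{C}\cup\{\hat{0},\hat{1}\}$, the assignment $C\mapsto\overline{C}$ is injective, so $\overline{C}_1,\ldots,\overline{C}_t$ is a repetition-free listing of all the facets of $\Delta(\overline{P})$, and it is meaningful to ask whether this listing is a shelling order.

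Next I would translate the intersection subcomplexes. For all $i,j$ we have $C_i\cap C_j = (\overline{C}_i\cap\overline{C}_j)\cup\{\hat{0},\hat{1}\}$, a disjoint union, and since $\langle F\rangle\cap\langle G\rangle=\langle F\cap G\rangle$ and joining with a fixed nonempty complex commutes with unions, this yields
$$\langle C_i\rangle\cap\Bigl(\bigcup_{j<i}\langle C_j\rangle\Bigr)\;=\;\Gamma_i * \bigl\langle\{\hat{0},\hat{1}\}\bigr\rangle,\qquad\text{where}\quad \Gamma_i:=\langle \overline{C}_i\rangle\cap\bigcup_{j<i}\langle\overline{C}_j\rangle,$$
and $\langle\{\hat{0},\hat{1}\}\rangle$ denotes the full simplex on the two vertices $\hat{0}$ and $\hat{1}$. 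The facets of $\Gamma_i * \langle\{\hat{0},\hat{1}\}\rangle$ are exactly the sets $F\cup\{\hat{0},\hat{1}\}$ for $F$ a facet of $\Gamma_i$, so dimension increases by $2$ under this correspondence; therefore $\Gamma_i * \langle\{\hat{0},\hat{1}\}\rangle$ is pure of dimension $d$ if and only if $\Gamma_i$ is pure of dimension $d-2$. Taking $d=|C_i|-2$ and using $|C_i|-2 = |\overline{C}_i| = (|\overline{C}_i|-2)+2$, we conclude that $\langle C_i\rangle\cap\bigcup_{j<i}\langle C_j\rangle$ is pure of dimension $|C_i|-2$ if and only if $\Gamma_i$ is pure of dimension $|\overline{C}_i|-2$. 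The hypothesis gives the former for each $i\in\{2,\ldots,t\}$, hence the latter holds for each such $i$, which is exactly the assertion that $\overline{C}_1,\ldots,\overline{C}_t$ shells $\Delta(\overline{P})$.

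I do not expect a real obstacle; the delicate points are only the degenerate cases — if $\overline{P}=\emptyset$ there is nothing to prove, and when some $\Gamma_i$ collapses to $\{\emptyset\}$ one must read the dimension bookkeeping, and ``pure of dimension $-1$'', literally — together with the mild care needed to apply the join identity to genuine simplicial complexes. Conceptually everything rests on the single observation that joining with the edge $\{\hat{0},\hat{1}\}$ is an invertible operation that shifts dimension by the constant $2$ and commutes with the intersections and unions occurring in the definition of a shelling.
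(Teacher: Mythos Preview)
Your argument is correct. The paper itself states this proposition without proof, treating it as a standard background fact, so there is no ``paper's own proof'' to compare against. Your join identity $\langle C_i\rangle\cap\bigl(\bigcup_{j<i}\langle C_j\rangle\bigr)=\Gamma_i*\langle\{\hat 0,\hat 1\}\rangle$ is exactly the right observation, and the dimension bookkeeping goes through as you say; the degenerate cases you flag (in particular $\overline P=\emptyset$) are the only ones that need a word, and you have handled them.
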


We will now describe EL-shellability, a method developed by  Bj\"{o}rner and Wachs in \cite{bj,bw2,bw,bw4,bw3} to show that the order complex of a bounded poset is shellable. If $P$ is a bounded poset, an \emph{edge labeling} of $P$ is a map $\lambda: \mathcal{C}(P) \rightarrow \Lambda$, where $\mathcal{C}(P)$ is the set of edges of $P$, and $\Lambda$ is a totally ordered set. If $P$ has an edge labeling $\lambda$, and $C= x_1 \lessdot \cdots\lessdot x_m$ is a chain of $P$, then we call the word $\lambda(C)=\lambda(x_1\lessdot x_2)\lambda(x_2\lessdot x_3)\ldots\lambda(x_{m-1}\lessdot x_m)$ the \emph{chain label} of the chain $C$. We say that a chain $C$ is \emph{increasing} if $\lambda(C)$ is strictly increasing, and that $C$ is \emph{decreasing} if $\lambda(C)$ weakly decreasing.\\

Suppose that $P$ is a bounded poset. An \emph{edge-lexicographical labeling} or \emph{EL-labeling} of $P$ is an edge labeling such that for each interval $[x,y]$ in $P$, there is a unique increasing maximal chain, which lexicographically precedes the other maximal chains of $[x,y]$. A bounded poset that admits an EL-labeling is said to be \emph{edge-lexicographic shellable} or \emph{EL-shellable}. \\

\begin{thm}[Bj\"{o}rner \cite{bj}, Bj\"{o}rner and Wachs \cite{bw}]
Suppose $P$ is a bounded poset with an edge lexicographical labeling. Then the lexicographical ordering of the maximal chains of $P$ is a shelling of $\Delta(P)$. The induced order on the maximal chains of $\overline{P}$ is a shelling of $\Delta(\overline{P})$, where the number of $i$-spheres in $\Delta(\overline{P})$ is equal to the number of decreasing maximal chains of length $i+2$ in $P$.\label{kjvgm}
\end{thm}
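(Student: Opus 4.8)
The plan is to reduce everything to one description of the part of each facet that the earlier facets already cover. Fix the lexicographic ordering $C_{1},\dots,C_{t}$ of the maximal chains of $P$, write a maximal chain as $C\colon \hat{0}=x_{0}\lessdot x_{1}\lessdot\dots\lessdot x_{N}=\hat{1}$, and call a position $k\in\{1,\dots,N-1\}$ a \emph{descent of $C$} when $\lambda(x_{k-1}\lessdot x_{k})\ge\lambda(x_{k}\lessdot x_{k+1})$; let $\mathcal{D}(C)$ be its set of descents. The key step is to prove the identity
\[
\langle C_{j}\rangle\cap\Bigl(\bigcup_{i<j}\langle C_{i}\rangle\Bigr)\;=\;\bigcup_{k\in\mathcal{D}(C_{j})}\langle\,C_{j}\setminus\{x_{k}\}\,\rangle\qquad(\ast)
\]
for every $j$; granting $(\ast)$, the rest of the theorem is short.

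For the inclusion $\supseteq$ in $(\ast)$, fix $k\in\mathcal{D}(C_{j})$. The two-step chain $x_{k-1}\lessdot x_{k}\lessdot x_{k+1}$ has a weakly decreasing label, so it is not the increasing maximal chain of the interval $[x_{k-1},x_{k+1}]$; the EL property provides a unique increasing maximal chain of that interval, lexicographically preceding all maximal chains of $[x_{k-1},x_{k+1}]$, which yields some $x_{k}'\neq x_{k}$ with $x_{k-1}\lessdot x_{k}'\lessdot x_{k+1}$ whose label is lexicographically smaller. Splicing this into $C_{j}$ in place of $x_{k}$ produces a maximal chain $C_{i}$ with $\lambda(C_{i})<_{\mathrm{lex}}\lambda(C_{j})$, hence $i<j$, and $C_{j}\setminus\{x_{k}\}\subseteq C_{i}$. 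For the inclusion $\subseteq$ I would prove the contrapositive: if a face $F\subseteq C_{j}$ contains $x_{k}$ for \emph{every} $k\in\mathcal{D}(C_{j})$, then no $C_{i}$ with $i<j$ contains $F$. Put $F^{+}=F\cup\{\hat{0},\hat{1}\}$. Between two consecutive elements $y<y'$ of $F^{+}$, all vertices of $C_{j}$ strictly between them are omitted by $F$ and are interior non-descent positions, so the restriction of $C_{j}$ to $[y,y']$ has strictly increasing label and is the lexicographically least maximal chain of $[y,y']$. Any $C_{i}$ containing $F$ contains $F^{+}$, so its restriction to each such $[y,y']$ is a maximal chain of that interval and is therefore lexicographically at least that of $C_{j}$; at the first interval where the two restrictions differ the inequality is strict, forcing $\lambda(C_{i})>_{\mathrm{lex}}\lambda(C_{j})$ unless $C_{i}=C_{j}$. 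This establishes $(\ast)$.

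For $j\ge2$ the chain $C_{j}$ is not the increasing maximal chain of $[\hat{0},\hat{1}]$, so $\mathcal{D}(C_{j})\neq\emptyset$ and the right side of $(\ast)$ is a nonempty union of codimension-one faces of the simplex $\langle C_{j}\rangle$, i.e.\ a pure $(|C_{j}|-2)$-dimensional complex; thus the lexicographic order shells $\Delta(P)$. By Proposition \ref{htgdyh} the induced order on the $\overline{C}_{j}$ shells $\Delta(\overline{P})$, and since every vertex $x_{k}$ occurring in $(\ast)$ is an interior vertex, hence $\neq\hat{0},\hat{1}$, intersecting $(\ast)$ with the subcomplex $\Delta(\overline{P})$ gives
\[
\langle\overline{C}_{j}\rangle\cap\Bigl(\bigcup_{i<j}\langle\overline{C}_{i}\rangle\Bigr)\;=\;\bigcup_{k\in\mathcal{D}(C_{j})}\langle\,\overline{C}_{j}\setminus\{x_{k}\}\,\rangle .
\]
By Theorem \ref{asdfa}, $\overline{C}_{j}$ is a homology facet precisely when its whole boundary is covered, that is when $\overline{C}_{j}\setminus\{x_{k}\}$ occurs for every interior vertex $x_{k}$ of $C_{j}$, that is when $\mathcal{D}(C_{j})=\{1,\dots,N-1\}$, which says exactly that $\lambda(C_{j})$ is weakly decreasing. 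Since $\dim\overline{C}_{j}=|\overline{C}_{j}|-1=N-2$ while the chain $C_{j}$ has length $N$, the $i$-dimensional homology facets are exactly the decreasing maximal chains of length $i+2$, and the stated sphere count follows.

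The main obstacle is the inclusion $\subseteq$ in $(\ast)$: showing that a face of $C_{j}$ which omits none of its descents cannot lie in an earlier facet. This is the one point that genuinely uses the full EL hypothesis — a unique increasing chain, lexicographically least, in \emph{every} subinterval and not merely in $[\hat{0},\hat{1}]$ — applied to the subintervals cut out by $F^{+}$. Everything after $(\ast)$ is bookkeeping with dimensions together with Proposition \ref{htgdyh} and Theorem \ref{asdfa}.
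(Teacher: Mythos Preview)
The paper does not give its own proof of this theorem; it is quoted as a known result of Bj\"orner and Bj\"orner--Wachs and used as a black box. Your argument is correct and is essentially the classical one from those references: you identify, via the descent set $\mathcal{D}(C_j)$, the restriction face of each maximal chain in the lexicographic shelling, and then read off both shellability and the characterisation of homology facets as the weakly decreasing chains. The one cosmetic point is that distinct maximal chains could in principle share a label word, so ``the lexicographic ordering'' is a priori only a total preorder; but your proof of $(\ast)$ in fact establishes strict lexicographic inequalities in both directions, so any linear extension of that preorder works, and no change to the argument is needed.
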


\section{An $EL$-labeling of $\Pi_{n,s+1}$}

Suppose $(P,w^1,\ldots, w^s)$ is a vertex of $\Pi_{n,s+1}-\{\hat{0}\}$. We call $w^i$ the $i$th labeling of $P$, and if $I$ is a part in $P$, we call its corresponding label in $w^i$ the $i$th label of $I$. From now on, we will list the parts of $P$ in the order of their minimal element (a part $I$ is listed earlier than a part $J$ if the lowest integer in $I$ is lower than the lowest integer in $J$), and we will represent each $w^i$ by ordering the labelings by the parts they label in $P$. For example $$(P,w^1,w^2) = (\{1,4,6\}\{2,3\}\{5\},\{2,3,5\}\{4,6\}\{1\},\{3,5,6\}\{2,4\}\{1\})$$ means that the part $\{1,4,6\}$ has 1st label $\{2,3,5\}$ and 2nd label $\{3,5,6\}$, the part $\{2,3\}$ has 1st label $\{4,6\}$ and 2nd label $\{2,4\}$, and the part $\{5\}$ has $1$st label $\{1\}$ and $2$nd label $\{1\}$. 

\begin{example}
We depict the Hasse diagram of the poset $\overline{\Pi}_{3,2}$. 

\begin{center}
\begin{tikzpicture}[line cap=round,line join=round,>=triangle 45,x=1.0cm,y=1.0cm]
\draw (-2,-2)-- (0,2);
\draw (-2,-2)-- (2,3);
\draw (-2,-2)-- (4,4);
\draw (-4,-1)-- (-2,3);
\draw (-4,-1)-- (0,4);
\draw (-4,-1)-- (2,5);
\draw (-6,0)-- (-4,4);
\draw (-6,0)-- (-2,5);
\draw (-6,0)-- (0,6);
\draw (2,-2)-- (0,2);
\draw (2,-2)-- (-2,3);
\draw (2,-2)-- (-4,4);
\draw (4,-1)-- (2,3);
\draw (4,-1)-- (0,4);
\draw (4,-1)-- (-2,5);
\draw (6,0)-- (4,4);
\draw (6,0)-- (2,5);
\draw (6,0)-- (0,6);
\begin{scriptsize}

\draw (-2,3) node[above,fill=white] {$(\{1,3\}\{2\},\{1,2\}\{3\})$};
\draw (-4,4) node[above,fill=white] {$(\{1\}\{2,3\},\{1\}\{2,3\})$};
\draw (2,3) node[above,fill=white] {$(\{1\}\{2,3\},\{3\}\{1,2\})$};
\draw (0,4) node[above,fill=white] {$(\{1,2\}\{3\},\{12,3\}\{1\})$};
\draw (-2,5) node[above,fill=white] {$(\{1,3\}\{2\},\{1,3\}\{2\})$};
\draw (4,4) node[above,fill=white] {$(\{1,3\}\{2\},\{2,3\}\{1\})$};
\draw (2,5) node[above,fill=white] {$(\{1\}\{2,3\},\{2\}\{1,3\})$};
\draw (0,6) node[above,fill=white] {$(\{1,2\}\{3\},\{1,2\}\{3\})$};
\draw (0,2) node[above,fill=white] {$(\{1,2\}\{3\},\{1,3\}\{2\})$};
\draw (-2,-2) node[below,fill=white] {$(\{1\}\{2\}\{3\},\{3\}\{1\}\{2\})$};
\draw (-4,-1) node[below,fill=white] {$(\{1\}\{2\}\{3\},\{2\}\{3\}\{1\})$};
\draw (-6,0) node[below,fill=white] {$(\{1\}\{2\}\{3\},\{1\}\{2\}\{3\})$};
\draw (2,-2) node[below,fill=white] {$(\{1\}\{2\}\{3\},\{1\}\{3\}\{2\})$};
\draw (6,0) node[below,fill=white] {$(\{1\}\{2\}\{3\},\{2\}\{1\}\{3\})$};
\draw (4,-1) node[below,fill=white] {$(\{1\}\{2\}\{3\},\{3\}\{2\}\{1\})$};

\fill  (-2,3) circle (1.5pt);
\fill  (-4,4) circle (1.5pt);
\fill  (2,3) circle (1.5pt);
\fill  (0,4) circle (1.5pt);
\fill  (-2,5) circle (1.5pt);
\fill  (4,4) circle (1.5pt);
\fill  (2,5) circle (1.5pt);
\fill  (0,6) circle (1.5pt);
\fill  (-2,-2) circle (1.5pt);
\fill  (-4,-1) circle (1.5pt);
\fill  (-6,0) circle (1.5pt);
\fill  (2,-2) circle (1.5pt);
\fill  (6,0) circle (1.5pt);
\fill  (0,2) circle (1.5pt);
\fill  (4,-1) circle (1.5pt);
\end{scriptsize}
\end{tikzpicture}
\end{center}

\end{example}

Recall that $\Pi_n$ denotes the poset of partitions of $[n]$, whose elements are partitions of $[n]$, with relation $x \le y$ if and only if every part in $y$ is the union of parts in $x$. We will now describe an EL-labeling $\chi$ of the poset $\Pi_n$, which is defined by Wachs in \cite{wa}. 

\begin{thm}[Wachs \cite{wa}]
\label{jhgchgc} Suppose $x \lessdot y$ in $\Pi_n$, and that $y$ is obtained from $x$ by merging two parts $B_1$ and $B_2$. Let $\chi:\mathcal{C}(\Pi_n) \rightarrow \mathbb{Z}$ be defined as $$\chi(x \lessdot y) = \max(B_1 \cup B_2).$$ The map $\chi$ defines an EL-labeling of the poset $\Pi_n$. \label{wathm} 
\end{thm}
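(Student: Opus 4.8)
The plan is to verify the defining property of an EL-labeling directly: for every interval $[x,y]$ in $\Pi_n$ I must exhibit a unique $\chi$-increasing maximal chain and check that it lexicographically precedes all other maximal chains of $[x,y]$. I would argue by induction on $n$, using the two standard structural facts about the partition lattice. First, if $y$ has blocks $C_1,\dots,C_r$ and $C_i$ is the union of $b_i$ blocks of $x$, then $[x,y]\cong \Pi_{b_1}\times\cdots\times\Pi_{b_r}$, the $i$th factor being the lattice of partitions of the set of $x$-blocks contained in $C_i$. Second, under this isomorphism an edge of the $i$th factor (a merge of two blocks lying inside $C_i$) carries the $\chi$-label $\max D$, where $D$ is the largest, in the ordering by maxima, of the $x$-blocks involved in the merge. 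Listing the $x$-blocks inside $C_i$ as $D_1,\dots,D_{b_i}$ with $\max D_1<\cdots<\max D_{b_i}$ and identifying $D_j$ with $j$ exhibits the $i$th factor, together with its induced edge labeling, as $\Pi_{b_i}$ with Wachs' labeling composed with the strictly increasing relabeling $j\mapsto \max D_j$ of the label alphabet. Since a strictly increasing relabeling of the alphabet preserves both the notion of increasing chain and lexicographic order, each factor inherits the EL-property from the inductive hypothesis whenever $b_i<n$.

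Next I would handle the product. The key observation is that the $\chi$-labels occurring in different factors are automatically distinct integers, because a label used in the $i$th factor is the maximum of a subset of $C_i$ and the blocks $C_1,\dots,C_r$ of $y$ are pairwise disjoint. A maximal chain of a product is a maximal chain in each coordinate together with a shuffle, and its label word is the corresponding shuffle of the coordinate label words. For that word to be strictly increasing, each coordinate word must itself be strictly increasing — forcing, by induction, the unique increasing chain in each factor — and a shuffle of these pairwise-disjoint strictly increasing integer sequences is strictly increasing if and only if it is their sorted merge; this yields existence and uniqueness of the increasing maximal chain of $[x,y]$. For lexicographic minimality I would use that in any EL-poset the first letter of the increasing chain is the smallest first letter of any maximal chain; applying this in each factor and repeatedly taking the globally smallest available label reconstructs exactly the sorted-merge chain, so it is lex-first. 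This disposes of every interval except the top one, $[\hat{0},\hat{1}]=\Pi_n$, which is not a nontrivial product.

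For $[\hat{0},\hat{1}]$ I argue directly. Any maximal chain consists of $n-1$ merges and each $\chi$-label lies in $\{2,\dots,n\}$, so a strictly increasing label word must be exactly $(2,3,\dots,n)$; hence there is at most one increasing maximal chain. Then I check that the word $(2,3,\dots,n)$ is realized by exactly one chain: a first label $2$ forces the first merge to be $\{1\}\cup\{2\}$ (the only two blocks all of whose elements are $\le 2$ in the bottom partition), after which $[\{1,2\}\{3\}\cdots\{n\},\hat{1}]$ is isomorphic, as an edge-labeled poset, to $\Pi_{n-1}$ via a strictly increasing relabeling of the alphabet, and the induction hypothesis settles both uniqueness and the lexicographic claim (the first label of any maximal chain of $\Pi_n$ is $\ge 2$, and if it equals $2$ the remainder is inductively lex-$\ge (3,\dots,n)$).

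The part I expect to be the main obstacle, and would write most carefully, is the bookkeeping in the two isomorphism statements: that $[x,y]$ is the asserted product, and that the $\chi$-labeling restricts, factor by factor, to Wachs' labeling pre-composed with a strictly increasing alphabet map. Once those identifications are pinned down, the rest is the elementary combinatorics of shuffling disjoint increasing sequences. A viable alternative to the product reduction, worth mentioning, is to run a single forcing argument on $[x,y]$ itself — show that the smallest label available at the bottom of $[x,y]$ determines the first merge uniquely and then recurse on the resulting smaller interval — which avoids invoking the product decomposition as a black box but essentially re-derives it.
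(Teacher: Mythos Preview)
The paper does not prove this theorem: it is quoted verbatim from Wachs \cite{wa} and used as a black box in the proof of Theorem~\ref{kjgkjbkb}. There is therefore no ``paper's own proof'' to compare against.

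That said, your proposal is correct and is essentially the standard argument. The interval decomposition $[x,y]\cong\prod_i\Pi_{b_i}$ is right, as is the crucial observation that under the identification $D_j\leftrightarrow j$ (ordering the $x$-blocks inside $C_i$ by their maxima) the restriction of $\chi$ becomes Wachs' labeling on $\Pi_{b_i}$ post-composed with the strictly increasing map $j\mapsto\max D_j$; and the label sets in different factors are disjoint because they lie in disjoint $C_i$'s. The shuffle argument for products with disjoint label alphabets then gives existence, uniqueness, and lex-minimality of the increasing chain in every proper interval, and the separate treatment of $[\hat 0,\hat 1]$ via the forced first merge $\{1\}\cup\{2\}$ followed by the identification of the remaining interval with $\Pi_{n-1}$ (labels shifted by $+1$) closes the induction. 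The one place to be slightly more explicit is the lex-minimality step for the product: you should say why any maximal chain whose first label equals the global minimum must take that step in the \emph{unique} factor realizing that minimum (this is where disjointness of the factor label sets is used a second time), after which induction on chain length finishes it. Your alternative ``forcing'' outline---show the smallest available label at the bottom of $[x,y]$ pins down the first merge, then recurse---is in fact the same proof unpacked, and is how many references present it.
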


\begin{prop}
Any interval $[(P,w^1,\ldots, w^s),(P'',w^{1\prime \prime},\ldots,w^{s\prime \prime})]$ in $\Pi_{n,s+1}$ (where $(P,w^1,\ldots, w^s) \ne \hat{0}$) is isomorphic to the interval $[P,P'']$ in the poset $\Pi_n$. \label{hgchc}
\end{prop}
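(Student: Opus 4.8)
The plan is to construct an explicit order-preserving bijection $\phi$ from the interval $[(P,w^1,\ldots,w^s),(P'',w^{1\prime\prime},\ldots,w^{s\prime\prime})]$ in $\Pi_{n,s+1}$ to the interval $[P,P'']$ in $\Pi_n$, and then check that $\phi$ and $\phi^{-1}$ are both order-preserving. The key observation is that once the bottom element $(P,w^1,\ldots,w^s)$ is fixed, the labelings in any element $(Q,v^1,\ldots,v^s)$ lying above it are completely determined by $Q$: if a part $J$ of $Q$ is the union of parts $I_1,\ldots,I_k$ of $P$, then the $i$th label of $J$ in $v^i$ is forced to be the union of the $i$th labels of $I_1,\ldots,I_k$ in $w^i$. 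So I would define $\phi(Q,v^1,\ldots,v^s)=Q$, and define the inverse by sending a partition $Q$ with $P\le Q\le P''$ to the vector partition whose labelings are obtained from $(P,w^1,\ldots,w^s)$ by this forced union rule.

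First I would verify that $\phi$ is well defined: given $(Q,v^1,\ldots,v^s)$ in the interval, we have $P\le Q$, and the definition of $\le$ in $\Pi_{n,s+1}$ forces each label set of a part of $Q$ to be the union of the corresponding label sets of the parts of $P$ it contains; similarly $Q\le P''$ forces each part of $P''$ to be a union of parts of $Q$, and the label condition relating $(Q,v^1,\ldots,v^s)$ to $(P'',w^{1\prime\prime},\ldots,w^{s\prime\prime})$ is then automatic because unioning is associative. Hence $P\le Q\le P''$ in $\Pi_n$, so $\phi$ maps into $[P,P'']$. Next I would check that the inverse map is well defined: starting from $Q$ with $P\le Q\le P''$, form the labelings by the union rule. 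This genuinely yields a vector partition — each $v^i$ is a partition of $[n]$ because it is obtained by merging blocks of the partition $w^i$ of $[n]$ according to how $Q$ merges blocks of $P$, and a part of $Q$ with $\alpha$ elements gets a label of cardinality $\alpha$ since cardinalities add under disjoint union. One also checks $(P,w^1,\ldots,w^s)\le(Q,v^1,\ldots,v^s)\le(P'',w^{1\prime\prime},\ldots,w^{s\prime\prime})$ directly from the construction, using $Q\le P''$ for the upper relation.

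Then I would show $\phi$ is a bijection: the two maps are mutually inverse because, on the one side, any element above $(P,w^1,\ldots,w^s)$ has its labelings determined by the union rule (as noted above), so applying $\phi$ then the inverse recovers it; on the other side, applying the inverse then $\phi$ obviously returns $Q$. Finally, order preservation in both directions: if $(Q_1,v_1^1,\ldots)\le(Q_2,v_2^1,\ldots)$ in the interval then in particular $Q_1\le Q_2$ in $\Pi_n$, giving $\phi(Q_1,\ldots)\le\phi(Q_2,\ldots)$; conversely if $Q_1\le Q_2$ in $[P,P'']$, then each part of $Q_2$ is a union of parts of $Q_1$, and since all the label sets are determined from the common bottom $(P,w^1,\ldots,w^s)$ by unions, the $i$th label of a part of $Q_2$ is the union of the $i$th labels of the corresponding parts of $Q_1$ — which is exactly the relation $\le$ in $\Pi_{n,s+1}$.

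The main obstacle is not conceptual but bookkeeping: one must be careful that the label sets produced by the union rule are well defined independently of how the union is bracketed, and that the compatibility between the three levels ($P$ below, $Q$ in the middle, $P''$ above) holds for the labelings and not merely for the underlying partitions — but this follows formally from associativity and commutativity of set union together with the hypothesis that $(P,w^1,\ldots,w^s)$ and $(P'',w^{1\prime\prime},\ldots,w^{s\prime\prime})$ are already comparable in $\Pi_{n,s+1}$. I expect the cleanest write-up to state the ``labelings are forced'' lemma first as a small claim, then read off well-definedness, bijectivity, and order preservation from it in a few lines.
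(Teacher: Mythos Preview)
Your proposal is correct and takes exactly the same approach as the paper: the forgetful map $(Q,v^1,\ldots,v^s)\mapsto Q$ is the desired isomorphism. The paper's own proof is a single sentence asserting that this map is ``clear[ly]'' an isomorphism, so your write-up simply spells out in detail the verification (well-definedness, the forced-labeling inverse, and order preservation both ways) that the paper leaves to the reader.
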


\begin{proof}
It is clear by the definition of $\Pi_{n,s+1}$, that the map $$(P',{w}^{1\prime},\ldots, w^{s\prime}) \mapsto P'$$ defines an isomorphism of posets $$[(P,w^1,\ldots, w^s),(P'',w^{1\prime\prime},\ldots,w^{s\prime\prime})] \cong [P,P''].$$
\end{proof}

Henceforth, we will identify atoms $$(\{1\}\{2\}\ldots\{n\},\{w^1_1\}\ldots\{w^1_n\},\ldots\ldots, \{w^s_1\}\ldots\{w^s_n\})$$ in $\Pi_{n,s+1}$ with the word 
$$w_1^1\ldots w_n^1\ldots\ldots w^s_1\ldots w^s_n.$$ With this identification, the atoms can be ordered by the lexicographical ordering on $[n]^{ns}$. For each element $(P,w^1,\ldots,w^s) \in \Pi_{n,s+1} -\{\hat{0}\}$, there is a unique atom $w_1^1\ldots w_n^1\ldots\ldots w^s_1\ldots w^s_n$ of $\Pi_{n,s+1}$ which is less than $(P,w^1,\ldots,w^s)$, and is earliest in the lexicographic order on the atoms. We call $w_1^1\ldots w_n^1 \ldots\ldots w^s_1 \ldots w^s_n$ the \emph{atom of $(P,w^1,\ldots,w^s)$}, and denote it by $A(P,w^1,\ldots,w^s)$. If $(P,w^1,\ldots,w^s) \in \Pi_{n,s+1}$, we denote by $w_j^i$ the $(n(i-1)+j)$th element of $A(P,w^1,\ldots, w^s)$. The atom of $(P,w^1,\ldots,w^s)$ can be obtained as follows: Suppose that $\{k_1,\ldots,k_p\}$ is a part in $P$ with $i$th label $\{j_1,\ldots,j_p\}$, where $k_1< \cdots<k_p$ and $j_1 < \cdots < j_p$. Then $w^i_{k_\alpha} = j_{\alpha}$ for all $\alpha \in \{1,\ldots,p\}$. For example $$(\{1\}\{2\}\{3\}\{4\}\{5\}\{6\}\{7\}\{8\},\{2\}\{1\}\{5\}\{3\}\{4\}\{8\}\{6\}\{7\})$$ is the atom of $$(\{1,4,8\}\{2,3,7\}\{5,6\},\{2,3,7\}\{1,5,6\}\{4,8\})$$ and it is identified with the word $$21534867.$$ \\

We will now define an edge labeling $\lambda: \mathcal{C}(\Pi_{n,s+1}) \rightarrow \mathbb{Z} \times \mathbb{Z} \times \mathbb{Z}$ of $\Pi_{n,s+1}$, where the ordering on $\mathbb{Z} \times \mathbb{Z} \times \mathbb{Z}$ is the lexicographical ordering. This labeling is similar to the labelings defined in \cite{wa} and \cite{rw}, in that it labels the lowest edges with ``some version'' of 0, and labels some other edges with ``some version'' of a labeling of the partition lattice, in this case Wachs' label in Theorem \ref{wathm}:
\begin{itemize}
\item Suppose $$(P,w^1,\ldots,w^s) \lessdot (P',w^{1\prime},\ldots, w^{s \prime}),$$ and $$A(P,w^1,\ldots ,w^s) \ne A(P',w^{1\prime},\ldots, w^{s\prime}).$$ Then $$\lambda((P,w^1,\ldots,w^s) \lessdot (P',w^{1\prime},\ldots, w^{s\prime})) = (k,i,j)$$ where $(k,i,j)$ is the lexicographically first element of $\mathbb{Z} \times \mathbb{Z} \times \mathbb{Z}$ such that $w_k^i \ne w_k^{i\prime}$ and $w_k^{i\prime}=j$.\\ 
\item Suppose $$(P,w^1,\ldots ,w^s) \lessdot (P',w^{1\prime},\ldots, w^{s\prime}),$$ and $$A(P,w^1,\ldots ,w^s) = A(P',w^{1\prime},\ldots, w^{s \prime}).$$ Then $$\lambda((P,w^1,\ldots,w^s) \lessdot (P',w^{1\prime},\ldots, w^{s\prime})) = (n,\max{(I \cup J)},0),$$ where $I$ and $J$ are the parts in $P$ that are merged to give $P'$.\\
\item For any edge $$\hat{0} \lessdot a,$$ where $a$ is an atom of $\Pi_{n,s+1}$, $$\lambda(\hat{0} \lessdot a)=(n-1,s+m,0)$$ where $a$ is the $m$th atom in the lexicographical ordering on the atoms of $\Pi_{n,s+1}$.
\end{itemize}

We make the following observations about this edge labeling before proceeding with the proof of Theorem \ref{kjgkjbkb}, which shows that $\lambda$ is an EL-labeling.\\
\begin{lemma}
\label{htchft}
The edge labeling $\lambda$ satisfies the following five conditions:
\begin{itemize}
\item[(1)] If $(P,w^1,\ldots,w^s) \le (P',w^{1\prime},\ldots,w^{s\prime})$ in $\Pi_{n,s+1}$, then $A(P',w^{1\prime},\ldots,w^{s\prime}) \le A(P,w^1,\ldots,w^s)$ in the lexicographical ordering on atoms. 
\item[(2)] Any chain $\hat{0}\lessdot C_1\lessdot \cdots\lessdot C_d$ that contains an edge $x\lessdot y$ such that $A(x) \ne A(y)$, is not increasing.
\item[(3)]If $(P,w^1,\ldots,w^s) \lessdot (P',w^{1\prime},\ldots,w^{s\prime})$ is an edge such that $A(P,w^1,\ldots,w^s) \ne A(P,w^{s\prime},\ldots,w^{s\prime})$ and $\lambda((P,w^1,\ldots,w^s) \lessdot (P',w^{1\prime},\ldots,w^{s\prime})) =(k,i,j)$, then $w_{k}^i>w_k^{i \prime}=j$.
\item[(4)] If $(P,w^1,\ldots,w^s) \lessdot (P',w^{1\prime},\ldots,w^{s\prime})$ is an edge such that $A(P,w^1,\ldots,w^s) \ne A(P,w^{s\prime},\ldots,w^{s\prime})$ and $\lambda((P,w^1,\ldots,w^s) \lessdot (P',w^{1\prime},\ldots,w^{s\prime})) =(k,i,j)$, then $P'$ is obtained from $P$ by merging the part that contains $k$ with the part whose $i$th label contains $j$. 
\item[(5)] Suppose $I=[(P,w^1,\ldots,w^s),(P',w^{1\prime},\ldots, w^{s\prime})]$ is an interval in $\Pi_{n,s+1}$, and that $(k,i,j)$ is the first triple such that $w_k^i \ne w_k^{i \prime} =j$. Then every maximal chain in $I$ has a unique edge with label $(k,i,j)$, and every edge label in $I$ is greater than or equal to $(k,i,j)$.
\end{itemize}
\end{lemma}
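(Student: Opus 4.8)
The plan is to derive all five items from a single combinatorial observation about how the atom word changes under one merge. First I would prove a \emph{Merge Lemma}: if $z'$ is obtained from $z=(P,w^1,\dots,w^s)$ by merging two parts $C_1,C_2$ of $P$, then in each labeling $\ell$ the letters $w^\ell_k$ with $k\notin C_1\cup C_2$ are unchanged, while the letters at the positions in $C_1\cup C_2$, read in increasing order of position, change from \emph{some} rearrangement of the multiset obtained by uniting the $\ell$th labels of $C_1$ and $C_2$ to the \emph{increasing} rearrangement of that same multiset. This is immediate from the recipe defining $A(P,w^1,\dots,w^s)$. I would then record two consequences. Since an increasing sequence is the lexicographically least rearrangement of a fixed multiset, at the first position of $C_1\cup C_2$ where the old and new words disagree the new letter is strictly smaller; and since two distinct rearrangements of one multiset cannot differ in exactly one coordinate, and since position $n$ (the largest element of the part it lies in) receives afterwards the maximum of the combined label, which is at least the old letter there, position $n$ is never the first disagreement --- consequently a label $(k,i,j)$ of the first bullet type (an edge on which the atom changes, call it \emph{type-A}; the other edges above $\hat{0}$ are \emph{type-B}) always has $k\le n-1$.

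From the Merge Lemma, item (1) follows by comparing the two atom words block by block in $\ell$, applying the lemma within each block, and composing along a maximal chain from $x$ to $y$; equality holds exactly at type-B edges. Item (3) follows because the position $k$ in a type-A label is, by construction, the first position of labeling $i$ at which $z$ and $z'$ disagree, hence the first disagreement inside the merged block of labeling $i$, where by the Merge Lemma the new letter $j$ is strictly below the old one. For (4): the position $k$ lies in one of the two merged parts, say $B$; were $j=w^{i\prime}_k$ also in the $i$th label of $B$, it would equal $w^i_{k'}$ for a unique $k'\in B$, and I would exclude this in three cases --- $k'=k$ contradicts $w^i_k\ne j$; $k'>k$ contradicts the fact that atom letters increase along a part, together with (3); and $k'<k$ would force $w^{i\prime}_{k'}=j=w^{i\prime}_k$, two equal letters in the $i$th label of the merged part, which is impossible. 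Hence $j$ lies in the $i$th label of the \emph{other} merged part, which is exactly (4).

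For (2): the edge $\hat{0}\lessdot C_1$ has a label with first coordinate $n-1$, every type-B edge has first coordinate $n$, and every type-A edge has first coordinate $\le n-1$ by the remark above. Given a chain $\hat{0}=C_0\lessdot C_1\lessdot\cdots$ containing a type-A edge, I would let $t\ge1$ be least with $C_t\lessdot C_{t+1}$ of type A. If $t\ge2$, then $C_{t-1}\lessdot C_t$ is a type-B edge (first coordinate $n$) immediately followed by a type-A edge (first coordinate $\le n-1$), a strict descent. If $t=1$, I would compare the label $(n-1,s+m,0)$ of $\hat{0}\lessdot C_1$ with the label $(k,i,j)$ of $C_1\lessdot C_2$: since $k\le n-1$, and when $k=n-1$ also $i\le s<s+m$, the label strictly drops. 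Either way the chain is not increasing.

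Item (5) is the one requiring care, and its whole difficulty is that the order on labels gives priority to the integer index $k$, whereas the atom word is read with priority on the labeling index $i$, so a priori an edge buried inside the interval $I$ could carry a label below $(k,i,j)$. My plan is to re-index the atom letters into the word $W(y)=(w^1_1(y),\dots,w^s_1(y),\,w^1_2(y),\dots,w^s_2(y),\,\dots)$, whose lexicographic order reflects exactly the triple order on positions. By (3), together with the fact that type-B edges fix the whole atom word, $W$ is weakly lexicographically decreasing along every edge, hence along every chain, so $W(\mathrm{top})\le W(y)\le W(\mathrm{bot})$ for all $y\in I$. Applying the elementary lex sandwich fact --- if $u\le v\le w$ lexicographically and $u,w$ agree on an initial segment, then so does $v$ --- and using that $W(\mathrm{bot})$ and $W(\mathrm{top})$ first differ at the position coded by $(k,i)$, I would conclude that $W(y)$ agrees with $W(\mathrm{bot})$ on every position before $(k,i)$ for all $y\in I$. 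Hence no edge of $I$ disagrees there: every type-A edge of $I$ has a label whose first pair is $\ge(k,i)$, and when that pair equals $(k,i)$ the third coordinate is $w^i_k(z')\ge w^i_k(\mathrm{top})=j$; type-B edges have first coordinate $n>k$. This gives the second assertion of (5). For the first, along a maximal chain of $I$ the positions before $(k,i)$ are constant, so $w^i_k$ itself is weakly decreasing along the chain, running from $w^i_k(\mathrm{bot})>j$ down to $j$; the edges where it changes are exactly those labeled $(k,i,\cdot)$, the last of them is labeled $(k,i,j)$ since that is where $w^i_k$ reaches $j$, and it is unique because once $w^i_k=j$ it cannot rise again. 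The reconciliation of the two orderings via $W$ is the heart of the matter; everything else is bookkeeping on top of the Merge Lemma.
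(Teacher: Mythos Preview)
Your proof is correct and covers all five items. The underlying combinatorics matches the paper's, but your organization is different and in places cleaner. The paper treats each item ad hoc: (1) is declared immediate; (3) is reduced to (1) applied inside a single labeling block; (4) is argued by locating the preimage of $j$ in the same part and getting an earlier disagreement; (2) is a one-line comparison of $(n-1,s+m,0)$ with the type-A label $(k,i,j)$ using $k\le n-1$, $i\le s$. You instead distill a single ``Merge Lemma'' (the atom word inside the merged block becomes the increasing rearrangement) and derive (1), (3), (4), and the bound $k\le n-1$ from it; this buys uniformity and makes the inequality in (3) transparent rather than a consequence of (1).

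The genuine methodological divergence is in (5). The paper argues by contradiction: take a hypothetical smallest edge label $(\bar k,\bar i,\bar j)<(k,i,j)$, follow a chain through it, and use (3) to force the $(\bar k,\bar i)$-coordinate to keep dropping, contradicting the definition of $(k,i,j)$ at the top. You instead re-index the atom word as $W(y)=(w^1_1,\dots,w^s_1,w^1_2,\dots)$ so that the triple order on labels coincides with the lex order on positions of $W$; then (3) makes $W$ weakly lex-decreasing along edges, and a lex sandwich between $W(\mathrm{bot})$ and $W(\mathrm{top})$ freezes all coordinates before $(k,i)$ throughout the interval. This is a cleaner structural argument: the re-indexing trick is precisely what reconciles the two competing orderings (label-priority on $k$ versus atom-priority on $i$), and once $W$ is monotone the rest is bookkeeping. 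Both arguments ultimately rest on (3), but yours avoids the somewhat delicate contradiction-chasing in the paper and makes the ``every maximal chain has a unique $(k,i,j)$-edge'' claim fall out of the monotone descent of the single coordinate $w^i_k$.
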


\begin{proof}
Condition (1) follows immediately from the definition of $\lambda.$\\

In Condition (2), $\lambda(x\lessdot y) =(k,i,j)$ where $k \le n-1$ and $i \le s$. Also, $\lambda(\hat{0} \lessdot C_1) =(n-1,s+m,0)$ for some $m>0$. Therefore $(n-1,s+m,0)>(k,i,j)$, which implies that this chain is not increasing. \\

We will now prove condition (3). Suppose for a contradiction that $j>w_k^i$. Now, the restriction of $A(P,w^1,\ldots,w^s)$ (respectively $A(P,w^{1\prime},\ldots,w^{s\prime})$) to the subword $w_1^i,\ldots,w_n^i$ (respectively $w_1^{i\prime},\ldots,w_n^{i\prime}$), is equal to the atom $A(P,w^i)$ (respectively $A(P,w^{i\prime})$) in $\Pi_{n,2}$. Therefore, by Condition (1), we have that $j<w_k^i$.\\ 

We will now prove condition (4). By the definition of $A(P,w^1,\ldots, w^s)$, if $k$ is the $\alpha$th largest element in a part $L$, then $w_{k}^i$ is the $\alpha$th largest element in the $i$th label of $L$. The same is true of any element and any part, not just the element $k$ and the part $L$. Suppose that, for a contradiction, $j$ is contained in the $i$th label of $L$. Then, $j<w_k^i$, means that $j$ is less than the $\alpha$th largest element in the $i$th label of $L$. Then there is an integer $k-\epsilon$ ($\epsilon$ is a positive integer) which is less than $k$, and is contained in the part $L$, which has the property that $w_{k-\epsilon}^i=j$. Thus, $w_{k-\epsilon}^i \ne w_{k-\epsilon}^{i\prime}$, which contradicts that $(k,i,j)$ is the first triple such that $w_k^i \ne w_k^{i\prime}$. \\

We will now prove condition (5). Suppose for a contradiction that there is an edge with label $(\bar k, \bar i,\bar j)<(k,i,j)$ in $I$. Suppose also that $(\bar k,\bar i,\bar j)$ is the lowest edge label in $I$. Suppose that $(\bar k,\bar i) \ne (k,i)$, and consider a maximal chain $C$ in $I$ that contains an edge with label $(\bar k,\bar i,\bar j)$. By condition (3), and the fact that $(\bar k,\bar i,\bar j)$ is the lowest edge label in $I$, it must be true that higher edges in $C$ with a label of the form $(\bar k,\bar i,*)$ have lower values of $*$. Then we have a contradiction, since this implies that $(k,i,j)$ is not the lowest index such that $w_k^i \ne w_k^{i\prime} =j$. Suppose now that $(\bar k,\bar i) = (k,i)$. Consider a chain $C$ that contains an edge with the label $(\bar k,\bar i,\bar j)$. Again, by condition (3), and the fact that $(\bar k,\bar i,\bar j)$ is the lowest label in $I$, we know that higher edges in $C$ with a label of the form $(\bar k,\bar i,*)$ have lower values of $*$. Therefore, the highest edge with a label of the form $(\bar k,\bar i,*)$ in $C$ is the edge $(\bar k,\bar i,\bar j)$. Again, this implies the contradiction that $(k,i,j)$ is not the earliest index such that $w_k^i \ne w_k^{i\prime} = j$. Therefore, $(k,i,j)$ is the lowest edge label in $I$. By condition (4), this label occurs when the part that contains $k$ is merged with the part whose $i$th label contains $j$, and therefore this edge label occurs only once in every maximal chain. It is clear that an edge with label $(k,i,j)$ must occur once in each maximal chain.

\end{proof}

\begin{thm}
The edge labeling $\lambda$ of $\Pi_{n,s+1}$ is an EL-labeling.\label{kjgkjbkb}
\end{thm}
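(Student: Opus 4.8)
The plan is to verify the two defining properties of an EL-labeling for each interval $I=[(P,w^1,\ldots,w^s),(P',w^{1\prime\prime},\ldots,w^{s\prime\prime})]$ in $\Pi_{n,s+1}$: that $I$ has a unique increasing maximal chain, and that this chain is lexicographically first among all maximal chains of $I$. I would split into two cases according to whether the bottom of the interval is $\hat0$ or not, handling the generic (non-$\hat0$) case first and reducing as much as possible to Wachs' EL-labeling $\chi$ of $\Pi_n$ from Theorem \ref{wathm} via the isomorphism $[(P,w^1,\ldots,w^s),(P',\ldots)]\cong[P,P']$ of Proposition \ref{hgchc}.

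For an interval $I$ not containing $\hat0$, I would proceed by induction on the length of $I$, using Lemma \ref{htchft} as the engine. Let $(k,i,j)$ be the first triple with $w_k^i\ne w_k^{i\prime}=j$ (this exists precisely when $A$ is not constant on $I$; if $A$ is constant on $I$, the labels are all of the form $(n,\ast,0)$ and the interval is isomorphic via $A$-preservation to a sub-interval on which $\lambda$ is literally Wachs' $\chi$ shifted into the third-to-last coordinate, so that case follows directly from Theorem \ref{wathm}). By Lemma \ref{htchft}(5), every maximal chain of $I$ contains exactly one edge labeled $(k,i,j)$ and every other label in $I$ is strictly larger; moreover by (4) that edge is forced — it merges the part containing $k$ with the part whose $i$th label contains $j$. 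Hence an increasing maximal chain must use this edge \emph{first} (as its bottom edge), since any later occurrence would be preceded by a strictly larger label, contradicting monotonicity. Performing that forced merge produces a unique element $(Q,\ldots)\in I$ with $(P,\ldots)\lessdot(Q,\ldots)$, and the interval $[(Q,\ldots),(P',\ldots)]$ is shorter; by induction it has a unique increasing maximal chain, which is lexicographically first there. Prepending the forced bottom edge gives the unique increasing maximal chain of $I$. For lexicographic-firstness in $I$: any maximal chain of $I$ whose bottom edge is not $(k,i,j)$ has a strictly larger bottom label by (5), so it is lexicographically after the increasing chain; among chains sharing the bottom edge $(k,i,j)$, firstness reduces to the inductive claim in $[(Q,\ldots),(P',\ldots)]$.

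For an interval of the form $I=[\hat0,(P',\ldots)]$, the atoms below $(P',\ldots)$ are exactly the atoms $a$ of $\Pi_{n,s+1}$ with $a\le(P',\ldots)$, and by Lemma \ref{htchft}(2) an increasing maximal chain cannot contain any edge $x\lessdot y$ with $A(x)\ne A(y)$ above the atom level — so the chain, after its bottom edge $\hat0\lessdot a$, must stay within a single $A$-class, forcing $a=A(P',\ldots)$ (the unique minimal atom, by Lemma \ref{htchft}(1), whose $A$-class reaches $(P',\ldots)$). The bottom label $\lambda(\hat0\lessdot a)=(n-1,s+m,0)$ is strictly increasing into the interval $[a,(P',\ldots)]$ because every label there is of the form $(k,i,j)$ with $k\le n-1$, and when $k=n-1$ it has the form $(n-1,\ast,\ast)$ with first nontrivial coordinate at most... here one must check $(n-1,s+m,0)$ is below all such labels: labels $(k,i,j)$ with $A$ constant are $(n,\ast,0)>(n-1,\ast,\ast)$, and labels with $A$ non-constant have $i\le s<s+m$, so indeed every label in $[a,(P',\ldots)]$ exceeds $(n-1,s+m,0)$. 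Uniqueness and lexicographic-firstness of the increasing chain in $[a,(P',\ldots)]$ come from the non-$\hat0$ case; and since $A(P',\ldots)$ is the lexicographically first atom below $(P',\ldots)$, it receives the smallest $m$, hence the smallest bottom label, so the increasing chain is lexicographically first in $I$.

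I expect the main obstacle to be the bookkeeping in the inductive step of the non-$\hat0$ case: precisely, showing that after the forced first merge the labeling $\lambda$ restricted to $[(Q,\ldots),(P',\ldots)]$ is again the labeling to which the induction hypothesis applies — i.e., that the ``first differing triple'' genuinely advances and that no edge of the upper interval can sneak in a label $\le(k,i,j)$ — which is exactly what Lemma \ref{htchft}(5) is designed to give, but one must apply it carefully to the sub-interval rather than to $I$. The reduction to Wachs' theorem handles the combinatorially hardest ingredient (existence and uniqueness of increasing chains in partition-lattice intervals), so the remaining work is organizing these forced-edge arguments cleanly.
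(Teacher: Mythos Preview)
Your proposal is correct and takes essentially the same approach as the paper: induction on interval length with the same three-case split (bottom $\hat{0}$; non-$\hat{0}$ with $A$ constant, reducing to Wachs' $\chi$ via Proposition~\ref{hgchc}; non-$\hat{0}$ with $A$ non-constant, using Lemma~\ref{htchft}(5) to force the first edge), and the same handling of $[\hat{0},(P',\ldots)]$ by showing the increasing chain must pass through the atom $A(P',\ldots)$. One small simplification you can make in that last case: since $A$ is constant on $[A(P',\ldots),(P',\ldots)]$ by Lemma~\ref{htchft}(1), every label there is of the form $(n,\ast,0)>(n-1,s+m,0)$, so you need not worry about the $A$-non-constant label type at all.
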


\begin{proof}
Consider intervals in $\Pi_{n,s+1}$ of length 1. Such intervals contain exactly one edge, so that the restriction of $\lambda$ to this interval is an EL-labeling. We assume by way of induction that the restriction of $\lambda$ to any interval of length $m$ for some $m<n$ is an EL-labeling. We will now show that $\lambda$ restricts to an EL-labeling on any interval of length $m+1$.\\

Suppose $I=[(P,w^1,\ldots,w^s),(P',w^{1\prime},\ldots, w^{s\prime})]$ is an interval of length $m+1$, and $A(P,w^1,\ldots,w^s) \ne A(P',w^{1\prime},\ldots, w^{s\prime})$. Suppose that $(k,i,j)$ is the lexicographically first element of $\mathbb{Z}\times \mathbb{Z} \times \mathbb{Z}$ such that $w_k^i \ne w_k^{i\prime}$, and $w_k^{i\prime} =j$. By Lemma \ref{htchft} part (5), every chain in $I$ has a unique edge with label $(k,i,j)$, and all edges have label greater than or equal to $(k,i,j).$ This implies that there is a unique atom $a$ in $I$ such that $\lambda((P,w^1,\ldots,w^s) \lessdot a)=(k,i,j)$, which, by Lemma \ref{htchft} part (4), is obtained from $(P,w^1,\ldots,w^s)$ by merging the part that contains $k$ with the part whose $i$th label contains $j$. By induction, the interval $[a, (P',w^{1 \prime},\ldots, w^{s \prime})]$ contains a unique increasing maximal chain which lexicographically precedes other chains. The concatenation of this chain with $(P,w^1,\ldots,w^s) \lessdot a$ gives a chain $C'$ which is the increasing because all edge labels in $[a, (P',w^{1 \prime},\ldots, w^{s \prime})]$ must be greater than $(k,i,j)$. By induction, $C'$ is the unique increasing maximal chain in $I$ which contains $(P,w^1,\ldots,w^s) \lessdot a$, and it lexicographically precedes other chains containing $(P,w^1,\ldots,w^s) \lessdot a$. Since all chains in $[(P,w^1,\ldots,w^s),(P',w^{1\prime},\ldots, w^{s\prime})]$ must contain an edge with label $(k,i,j)$, and chains that don't contain $(P,w^1,\ldots,w^s) \lessdot a$ have lowest edge label greater than $(k,i,j)$, $C'$ is the unique increasing maximal chain in $I$ which lexicographically precedes all other chains.\\

Suppose $[(P,w^1,\ldots,w^s),(P',w^{1\prime},\ldots, w^{s\prime})]$ is an interval of length $m+1$, and $A(P,w^1,\ldots,w^s) =A(P',w^{1\prime},\ldots, w^{s\prime})$. Recall that, by Proposition \ref{hgchc}, $$[(P,w^1,\ldots,w^s),(P',w^{1\prime},\ldots, w^{s\prime})] \cong [P,P'],$$ and denote this map by $\phi$. Observe also that $\lambda(x \lessdot y) = (n,\chi(\phi(x) \lessdot \phi(y)),0)$ for all $x \lessdot y \in [(P,w^1,\ldots,w^s),(P',w^{1\prime},\ldots, w^{s\prime})]$, where $\chi$ is the edge-labeling given in Theorem \ref{wathm}. Hence there is a unique increasing maximal chain that lexicographically precedes other chains in this interval.\\

Suppose $I=[\hat{0}, (P,w^1,\ldots,w^s)]$ is an interval of length $m+1$. Then, by the inductive hypothesis, the interval $[A(P,w^1,\ldots,w^s),(P,w^1\ldots,w^s)]$ contains a unique increasing maximal chain that lexicographically precedes other chains in this interval. The concatenation of this chain with $\hat{0} \lessdot A(P,w^1,\ldots,w^s)$, denoted $C'$, is increasing, since by Lemma \ref{htchft} part (1) all elements in the interval $[A(P,w^1,\ldots,w^s),(P,w^1\ldots,w^s)]$ have the same atom, and so all edge labels are of the form $(n,*,*)$. Therefore, by induction, $C'$ is the unique increasing chain which lexicographically precedes chains in $I$ that contain $\hat{0}\lessdot A(P,w^1,\ldots,w^s)$. Since $A(P,w^1,\ldots,w^s)$ is defined as the earliest atom that is less than $(P,w^1,\ldots,w^s)$, it is clear that $C'$ lexicographically precedes all maximal chains in $I$. Any chain in $I$ that does not contain $\hat{0}\lessdot A(P,w^1,\ldots,w^s)$ contains a descent, since they contain an edge $x \lessdot y$ in which $A(x) \ne A(y)$, so by Lemma \ref{htchft} part(2), they are not increasing.\\
\end{proof}

We provide a proof of the following Theorem, which is immediately implied by the main results of \cite{sa}:
\begin{thm}[Sagan \cite{sa}]
For any $n$ and $s$, the order complex $\Delta(\Pi_{n,s+1})$ is homotopy equivalent to a wedge of $(n-2)$-dimensional spheres.   
\end{thm}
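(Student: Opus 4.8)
The plan is to combine the two EL-labeling results already established with the standard bridge between shellability and homotopy type. First I would invoke Theorem~\ref{kjgkjbkb}, which tells us that $\lambda$ is an EL-labeling of the bounded poset $\Pi_{n,s+1}$. Then Theorem~\ref{kjvgm} (Bj\"orner--Wachs) applies directly: the lexicographic order on maximal chains of $\Pi_{n,s+1}$ shells $\Delta(\Pi_{n,s+1})$, and the induced order shells $\Delta(\overline{\Pi}_{n,s+1})$, with the number of $i$-spheres equal to the number of decreasing maximal chains of length $i+2$ in $\Pi_{n,s+1}$. So the only real content is to verify that \emph{every} maximal chain of $\Pi_{n,s+1}$ has the same length, namely $n$, so that all homology facets — and hence all spheres — sit in a single dimension.

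The length count is where the small amount of work lies, and it follows from purity of $\Pi_{n,s+1}$, which the excerpt already asserts ("It is clear from the definition that the poset $\Pi_{n,s}$ is graded"). Concretely, a maximal chain in $\Pi_{n,s+1}$ starts at $\hat 0$, steps up to an atom (the discrete partition with some labelings), and then each covering step merges exactly two parts; since the discrete partition has $n$ parts and the top element $([n],\ldots,[n])$ has one part, there are exactly $n-1$ merging steps, plus the one step from $\hat 0$ to an atom, giving maximal chains of length $n$ in $\Pi_{n,s+1}$. Removing $\hat 0$ and $\hat 1$ leaves maximal chains of length $n-2$ in $\overline{\Pi}_{n,s+1}$, i.e. facets of $\Delta(\overline{\Pi}_{n,s+1})$ of dimension $n-2$. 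Equivalently, any decreasing maximal chain of $\Pi_{n,s+1}$ has length exactly $n$, so by Theorem~\ref{kjvgm} the only nonzero spheres occur in dimension $n-2$.

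Putting these together: $\Delta(\overline{\Pi}_{n,s+1})$ is shellable and pure of dimension $n-2$, so by Theorem~\ref{asdfa} it has the homotopy type of a wedge of spheres all of dimension $n-2$. (If $n=1$ the poset $\overline{\Pi}_{n,s+1}$ is empty and the statement is vacuous or interpreted via the reduced homology of a point; for $n\ge 2$ the argument above applies verbatim.) I would also remark that the number of such spheres is the number of decreasing maximal chains of length $n$ in $\Pi_{n,s+1}$ under $\lambda$, a fact that is exploited in the later sections.

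I do not anticipate a genuine obstacle here — the theorem is essentially a corollary of Theorem~\ref{kjgkjbkb} together with purity. The one point requiring a sentence of care is making the purity/length bookkeeping explicit (that the $\hat 0 \lessdot a$ step contributes one to the length and the remaining $n-1$ steps are all part-merges), since the dimension of the spheres hinges on it; everything else is a direct citation.
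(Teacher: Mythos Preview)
Your proposal is correct and follows essentially the same route as the paper's own proof: invoke Theorem~\ref{kjgkjbkb} to get an EL-labeling, apply Theorem~\ref{kjvgm} (and Proposition~\ref{htgdyh}) to obtain shellability of $\Delta(\overline{\Pi}_{n,s+1})$ and the wedge-of-spheres homotopy type, and then use purity to pin down the dimension as $n-2$. Your additional explicit bookkeeping of the chain length (one step $\hat 0\lessdot a$ plus $n-1$ merges) just spells out what the paper leaves as ``pure with maximal chains of $n-1$ elements.''
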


\begin{proof}
By Theorem \ref{kjgkjbkb}, the poset $\Pi_{n,s+1}$ admits an EL-labeling, so that by Theorem \ref{kjvgm}, the order complex $\Delta(\Pi_{n,s+1})$ is shellable. By Proposition \ref{htgdyh}, this implies that $\Delta(\overline{\Pi}_{n,s+1})$ is shellable, and homotopy equivalent to a wedge of spheres. The spheres are of dimension $(n-2)$ since $\overline{\Pi}_{n,s+1}$ is pure, and its maximal chains contain $n-1$ elements.  
\end{proof}


\section{The number of spheres in $\Pi_{n,s+1}$}

Recall Theorem \ref{kjvgm}, which states that the number of spheres is equal to the number of decreasing maximal chains in $\Pi_{n,s+1}$. In this section, we characterise the decreasing maximal chains of $\Pi_{n,s+1}$ in order to count them. In the case that $s=1$, we show that the number of decreasing maximal chains is equal to the number of complete non-ambiguous trees with $n$ internal vertices, which are defined in \cite{abb}.

\begin{prop}
If $C = C_0 \lessdot C_1 \lessdot \cdots \lessdot C_n$ is a decreasing maximal chain in $\Pi_{n,s+1}$, then $A(C_i) \ne A(C_{i+1})$ for all $i \in [n-1]$. \label{gfdzxgx}
\end{prop}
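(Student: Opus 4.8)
The plan is to argue by contradiction using the structure of the edge-labeling $\lambda$ together with the characterization of decreasing chains. Suppose $C = C_0 \lessdot C_1 \lessdot \cdots \lessdot C_n$ is decreasing, so that $\lambda(C)$ is weakly decreasing. The bottom edge is $\hat 0 \lessdot C_1$, which by definition has label $(n-1, s+m, 0)$ for some $m \ge 1$; in particular its first coordinate is $n-1$. Every other edge $C_i \lessdot C_{i+1}$ with $i \ge 1$ has label whose first coordinate is either at most $n-1$ (in the case $A(C_i) \ne A(C_{i+1})$, where the first coordinate is some $k \le n-1$) or exactly $n$ (in the case $A(C_i) = A(C_{i+1})$). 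Since $\lambda(C)$ is weakly decreasing and starts with first coordinate $n-1$, no later edge can have first coordinate $n$. Hence every edge $C_i \lessdot C_{i+1}$ with $i \ge 1$ falls into the first case, i.e. $A(C_i) \ne A(C_{i+1})$ for all such $i$.

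The one thing this argument leaves untouched is the very first edge $\hat 0 \lessdot C_1$, i.e.\ the claim that $A(C_0) \ne A(C_1)$ — but $C_0 = \hat 0$, and $A(\hat 0)$ is not defined, so presumably the intended statement concerns $i$ ranging over indices where $A$ is defined on both endpoints, or one simply notes that $A(C_1)$ is the atom $C_1$ itself which is certainly ``different from'' the undefined bottom (equivalently the statement is really about $i \in \{1,\dots,n-1\}$). In any case the substantive content is exactly the first paragraph: a decreasing maximal chain cannot contain an edge of the second type (an ``$A(x)=A(y)$'' edge), because such an edge would have label $(n,\ast,0)$ with first coordinate $n$, which cannot be weakly preceded by the bottom edge's first coordinate $n-1$.

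The main (and essentially only) obstacle is bookkeeping about the index set and the degenerate bottom vertex: one should be careful to state precisely for which pairs $(C_i, C_{i+1})$ the symbol $A$ is defined, and to confirm that a maximal chain in $\Pi_{n,s+1}$ has exactly $n+1$ vertices $\hat 0 = C_0 \lessdot C_1 \lessdot \cdots \lessdot C_n$ (so that $[n-1]$ indexes exactly the internal edges) — this follows since $\overline{\Pi}_{n,s+1}$ is pure with maximal chains of length $n-2$ in the proper part, hence length $n$ in $\Pi_{n,s+1}$. Beyond that, the proof is a one-line comparison of first coordinates of triples, using only the explicit formula for $\lambda$ on the three types of edges and the definition of a decreasing chain.
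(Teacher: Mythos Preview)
Your proof is correct and follows essentially the same approach as the paper: both observe that the bottom edge $\hat 0 \lessdot C_1$ has label with first coordinate $n-1$, while any edge with $A(C_i)=A(C_{i+1})$ has label with first coordinate $n$, contradicting weak decrease. Your careful remarks about the index set $[n-1]=\{1,\dots,n-1\}$ (so that $A$ is defined on both endpoints of every edge in question) make explicit a point the paper leaves implicit.
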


\begin{proof}
Now $\lambda(C_0 \lessdot C_1) = (n-1,s+j,0)$ for some $j \ge 1$. By the definition of $\lambda$, if $A(C_i) = A(C_{i+1})$ for some $i \in [n-1]$, then $\lambda(C_i \lessdot C_{i+1}) =(n,*,*)$, which implies the contradiction that $\lambda(C)$ is not increasing. Therefore, must have that $A(C_i) \ne A(C_{i+1})$ for all $i \in [n-1]$.   
\end{proof}

\begin{example}
The chain 
\begin{gather*}C=\hat{0} \lessdot (\{1\}\{2\}\{3\}\{4\}\{5\},\{5\}\{4\}\{1\}\{3\}\{2\}) \lessdot (\{1\}\{2,4\}\{3\}\{5\},\{5\}\{34\}\{1\}\{2\})\\
\lessdot (\{1\}\{2,3,4\}\{5\},\{5\}\{1,3,4\}\{2\}) \lessdot (\{1,5\}\{1,3,4\},\{25\}\{134\})\\
\lessdot (\{1,2,3,4,5\},\{1,2,3,4,5\})
\end{gather*}
is maximal decreasing in the poset $\Pi_{5,2}$. The decreasing sequence of edge labels for this chain is $\lambda(C)=(4,5+116,0)(2,1,3)(2,1,1)(1,1,2)(1,1,1)$. This example also demonstrates the condition on decreasing chains given in Propositions \ref{mnhv} and \ref{yhthch}. 
\end{example}

\begin{prop}
Suppose that $(P,w^1,\ldots,w^s) \lessdot (P',w^{1 \prime},\ldots,w^{s \prime})$ is an edge in a decreasing maximal chain of $\Pi_{n,s+1}$, in which $\lambda((P,w^1,\ldots,w^s) \lessdot (P',w^{1\prime},\ldots,w^{s\prime}))=(k,i,j)$. Then $P'$ is of the from $\{1\}\{2\} \ldots \{k-1\}I_{k}\ldots I_{m}$, where $I_k$ is a non singleton set. The partition $P$ is of the form 
$\{1\}\{2\}\ldots\{k-1\}I_{\alpha}I_{k+1} \ldots I_{\beta} \ldots I_{m}$ where $I_{\alpha}$ and $I_{\beta}$ are a partition of $I_k$ (note that $I_{\beta}$ may be to the left of $I_{k+1}$ or to the right of $I_m$). For all $h<i$, the $h$th label of $I_{\alpha}$ contains the minimal element of the $h$th label of $I_k$, and the $i$th label of $I_{\beta}$ contains the minimal element of the $i$th label of $I_k$     
\label{mnhv}
\end{prop}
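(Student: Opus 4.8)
The plan is to deduce every assertion by analysing the whole decreasing maximal chain that contains the given edge, rather than the edge in isolation. Write that chain as $\hat 0=C_0\lessdot C_1\lessdot\cdots\lessdot C_n=([n],\ldots,[n])$ and the given edge as $C_t\lessdot C_{t+1}$, with $(P,w^1,\ldots,w^s)=C_t$ and $(P',w^{1\prime},\ldots,w^{s\prime})=C_{t+1}$. Note $t\ge 1$: the bottom edge of a maximal chain is labelled $(n-1,s+m,0)$, whose middle entry exceeds $s$, whereas the label $(k,i,j)$ of our edge has $i\le s$. By Proposition \ref{gfdzxgx}, every edge $C_{r-1}\lessdot C_r$ with $2\le r\le n$ joins vertices with distinct atoms and hence carries a label $(k_r,i_r,j_r)$ of the first type in the definition of $\lambda$, so $k_r\le n-1$; and since the chain is decreasing, $k_2\ge k_3\ge\cdots\ge k_n$, so in particular $k_r\ge k:=k_{t+1}$ for all $r\le t+1$. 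I will also use, from Lemma \ref{htchft} part (4), that $C_{r-1}\lessdot C_r$ merges the part of $C_{r-1}$ containing $k_r$ with the part of $C_{r-1}$ whose $i_r$th label contains $j_r$, together with the fact (established inside the proof of that part) that $j_r$ does not belong to the $i_r$th label of the part containing $k_r$.

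The technical core of the argument — and the step I expect to be the main obstacle — is the claim that for every $r\in\{1,\ldots,t+1\}$ the elements $1,2,\ldots,k-1$ are singleton parts of $C_r$. I would prove this by induction on $r$, the base case $r=1$ being immediate because $C_1$ is an atom. For the inductive step, let the merge from $C_r$ to $C_{r+1}$ join the part $L$ containing $k_{r+1}$ with the part $M$ whose $i_{r+1}$th label contains $j_{r+1}$. Since $k_{r+1}\ge k$ and, by the inductive hypothesis, $1,\ldots,k-1$ are singletons of $C_r$, the part $L$ contains none of $1,\ldots,k-1$, so it remains to rule out $M=\{\ell\}$ with $\ell<k$. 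If this occurred, then the $i_{r+1}$th label of $M$ would be $\{j_{r+1}\}$, so $w_\ell^{i_{r+1}}(C_r)=j_{r+1}$ (writing $w^h_k(Q)$ for the position-$k$ entry of the $h$th block of the atom $A(Q)$) and $\ell<k\le\min L$; I would then compare the $i_{r+1}$th blocks of the words $A(C_r)$ and $A(C_{r+1})$. If $j_{r+1}$ is smaller than every element of the $i_{r+1}$th label of $L$, then in $C_{r+1}$ the element $\ell$ is the least element of $L\cup\{\ell\}$ and receives value $j_{r+1}$ while every element of $L$ keeps its old value, so this block does not change at all — contradicting that the label of $C_r\lessdot C_{r+1}$ has middle entry $i_{r+1}$. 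Otherwise $j_{r+1}$ exceeds the least element of the $i_{r+1}$th label of $L$ (equality being impossible by the fact recalled above), whence $w_\ell^{i_{r+1}}(C_{r+1})$ is that least element, which is less than $j_{r+1}$; so the block changes at position $\ell<k_{r+1}$, contradicting the lexicographic minimality of $(k_{r+1},i_{r+1},j_{r+1})$. Thus no merge among $C_1,\ldots,C_{t+1}$ touches $\{1,\ldots,k-1\}$, proving the claim. In particular $P'=C_{t+1}$ has $\{1\},\ldots,\{k-1\}$ as parts; letting $I_k$ be the part of $P'$ containing $k$, Lemma \ref{htchft} part (4) identifies $I_k$ as the union of the two parts of $P$ merged at $C_t\lessdot C_{t+1}$, so $|I_k|\ge 2$ and $\min I_k=k$, which is the asserted shape of $P'$.

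For $P=C_t$: since $P\lessdot P'$, each part of $P'$ is a union of parts of $P$, so the singletons $\{1\},\ldots,\{k-1\}$ of $P'$ are parts of $P$; and by Lemma \ref{htchft} part (4) the single merge from $P$ to $P'$ joins the part $I_\alpha$ of $P$ containing $k$ with the part $I_\beta$ of $P$ whose $i$th label contains $j$. Hence $I_\alpha\sqcup I_\beta=I_k$, all other parts coincide, and since $k\notin I_\beta$ this gives the stated shape $P=\{1\}\cdots\{k-1\}I_\alpha I_{k+1}\cdots I_\beta\cdots I_m$. Finally, the labelling conditions follow from the observation that $A$ sends the least element of a part to the least element of its label: for every $h$, $w_k^h(P)$ is the least element of the $h$th label of $I_\alpha$ (as $k=\min I_\alpha$ in $P$), and $w_k^h(P')$ is the least element of the $h$th label of $I_k$ (as $k=\min I_k$ in $P'$). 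For $h<i$, the lexicographic minimality of $(k,i,j)$ forces $w_k^h(P)=w_k^h(P')$, so the $h$th label of $I_\alpha$ contains the least element of the $h$th label of $I_k$; and for $h=i$ one has $w_k^i(P')=j$, so the least element of the $i$th label of $I_k$ is $j$, which by Lemma \ref{htchft} part (4) belongs to the $i$th label of $I_\beta$. This proves both labelling claims and completes the argument.
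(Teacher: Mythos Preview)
Your proof is correct and follows essentially the same strategy as the paper's: both argue by induction along the decreasing chain starting from the atom, use Lemma~\ref{htchft}(4) to identify the two parts being merged, and rule out the possibility that the merge touches one of the singletons $\{1\},\ldots,\{k-1\}$ by showing this would contradict the lexicographic minimality of the edge label. Your two-case analysis (comparing $j_{r+1}$ with the minimum of the $i_{r+1}$th label of $L$) spells out in detail what the paper compresses into the single clause ``as this would imply that $(k,i,j)$ is not the first triple such that $w_k^i\ne w_k^{i\prime}$,'' but the argument is the same.
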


\begin{proof}
First, we will show that $P'$ is of the form $\{1\}\ldots\{k-1\}I_k\ldots I_m$, where $I_k$ is a non-singleton set.\\

Assume that $(P,w^1,\ldots,w^s)$ is an atom of $\Pi_{n,s+1}$, so that $P = \{1\} \ldots \{n\}$. Then $P' = \{1\}\ldots\{k-1\}\{kf\}\ldots\{n\}$, where $\{f\} \in \{k+1,\ldots,n\}$. This is true since $w_k^i \ne w_k^{i \prime}$ implies that $w_f^i \ne w_f^{i \prime}$, and so $f<k$ contradicts that $(k,i,j)$ is the first triple such that $w_k^i \ne w_k^{i \prime}$.\\

 Assume by induction that $P'$ is of this form if $P'$ has $m$ parts where $n-1 \le  m \le 2$. We will show that $P'$ has this form when it has $m+1$ parts. If $C_{n-m-1} \lessdot (P,w^1,\ldots,w^s) \lessdot (P',w^{1\prime},\ldots,w^{s\prime})$ is contained in a maximal decreasing chain, then $\lambda(C_{n-m-1} \lessdot (P,w^1,\ldots,w^s)) \ge (k,i,j)$, and by induction, $P=\{1\}\ldots \{k-1\}I_{k}......I_{m+1}$, where $I_{k}$ contains $k$ and is a non-singleton set exactly when $\lambda(C_{n-m-1} \lessdot (P,w^1,\ldots,w^s)) = (k,*,*)$. By Lemma \ref{htchft} Part (4), $P'$ is obtained from $P$ by merging the part that contains $k$ with the part whose $i$th label contains $j$. Now the part whose $i$th label contains $j$ cannot be any of $\{1\},\ldots,\{k-1\}$, as this would imply that $(k,i,j)$ is not the first triple such that $w_k^i \ne w_k^{i \prime}$. Hence $P'$ is of the form $\{1\}\ldots\{k-1\}I_k\ldots I_m$ where $I_k$ is a non-singleton set.  \\

By Lemma \ref{htchft} Part (4), $(P',w^{1 \prime},\ldots,w^{s \prime})$ is obtained from $(P,w^1,\ldots,w^s)$ by merging the part whose $i$th label contains $j$ (the set $I_{\beta}$) with the part that contains $k$ (the set $I_{\alpha})$. Since $k$ is the least integer in the set $I_{\alpha}$, it is only possible that $(k,i,j)$ is the first triple such that $w_k^i \ne w_k^{i\prime}=j$ if for every $h<i$, the minimal element of the $h$th label of $I_k$ is contained in the $h$th label of $I_{\alpha}$, and the minimal element of the $i$th label of $I_k$ (the number $j$) is contained in the $i$th label of $I_{\beta}$.

\end{proof}

\begin{example}
The chain $\hat{0}=C_0 \lessdot C_1 \lessdot \cdots \lessdot C_5$ in $\Pi_{5,3}$ is maximal decreasing.  
$$C_5=([5],[5],[5])$$
$$C_4=(\{1,4,5\}\{2,3\},\{2,3,5\}\{1,4\},\{3,4,5\}\{1,2\})$$
$$C_3 =(\{1\}\{2,3\}\{4,5\},\{2\}\{1,4\}\{3,5\},\{4\}\{1,2\}\{3,5\})$$
$$C_2 = (\{1\}\{2\}\{3\}\{4,5\},\{2\}\{4\}\{1\}\{3,5\},\{4\}\{1\}\{2\}\{3,5\})$$
$$C_1= (\{1\}\{2\}\{3\}\{4\}\{5\},\{2\}\{4\}\{1\}\{3\}\{5\},\{4\}\{1\}\{2\}\{5\}\{3\} )$$
$$C_0 = \hat{0}.$$
Then
\begin{gather*}\lambda(C_0 \lessdot C_1) = (4,2+m,0) \hbox{~for some $m$},~~~\lambda(C_1 \lessdot C_2) =(4,2,3),~~~\lambda(C_2 \lessdot C_3) = (2,1,1)\\
\lambda(C_3 \lessdot C_4) = (1,2,3),~~~\lambda(C_4 \lessdot C_5) = (1,1,1),\end{gather*}  
and 
\begin{gather*}A(C_1) = (24135)(41253),~~A(C_2) =(24135)(41235),~~~A(C_3) = (21435)(41235)\\ A(C_4) = (21435)(31245),~~~A(C_5) =(12345)(12345)\end{gather*} where we have added brackets into the expressions for the atoms for clarity. \label{chainex}
\end{example}

\begin{prop}
Suppose $C_d\lessdot\cdots \lessdot C_n$ is the upper portion of a maximal decreasing chain in $\Pi_{n,s+1}$. Suppose that $C_d=(P,w^1,\ldots, w^s)$, where $P = \{1\}\ldots\{k-1\}I_k\ldots I_{n-d+1}$ and $I_k$ is a non-singleton set. Suppose that $C_{d-1} \lessdot C_d$ is of the following form:
\begin{itemize}
\item If $\lambda(C_d \lessdot C_{d+1}) = (k,i,j)$ for some $i$ and $j$, then the partition in $C_{d-1}$ is of the form $\{1\}\ldots\{k-1\}I_{\alpha}I_{k+1}\ldots I_{\beta}\ldots I_{n-d+1}$, where $I_{\alpha}$ and $I_{\beta}$ are a partition of $I_{k}$ (it is possible that $I_{\beta}$ is to the left of $I_{k+1}$ or to the right of $I_{n-d+1}$) and $k \in I_{\alpha}$. Also, for some $i'\ge i$, the $i'$th label of $I_{\beta}$ contains $w_k^i$ (the minimal element of the $i$th label of $I_k$), and for all $h<i'$, the $h$th label of $I_{\alpha}$ contains $w_k^h$.
\item If $\lambda(C_d \lessdot C_{d+1}) = (k',i,j)$ for some $k'<k$, or if $d=n$, then $C_{d-1}$ is obtained by partitioning $I_k$ into two non-empty sets $I_{\alpha}$ and $I_{\beta}$ so that $k \in I_{\alpha}$, and for some index $i'$ the $i'$th label of $I_{\beta}$ contains $w_k^i$. 
\end{itemize}
then $C_{d-1} \lessdot \cdots \lessdot C_n$ is the upper portion of a maximal decreasing chain in $\Pi_{n,s+1}$. \label{yhthch}
\end{prop}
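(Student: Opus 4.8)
The plan is to establish three facts about the edge $C_{d-1}\lessdot C_d$ and then assemble them: that it is a genuine cover relation of $\Pi_{n,s+1}$; that $A(C_{d-1})\neq A(C_d)$, so that its label is computed by the first clause in the definition of $\lambda$ and therefore has the form $(\hat k,\hat i,\hat j)$ with $\hat k\le n-1$; and that $\lambda(C_{d-1}\lessdot C_d)\ge\lambda(C_d\lessdot C_{d+1})$ whenever $d\le n-1$. Granting these, $C_{d-1}\lessdot\cdots\lessdot C_n$ is a chain ending at $\hat 1$ whose sequence of labels is weakly decreasing, and it continues downward: splitting a non-singleton part of the current partition according to the appropriate recipe of the statement produces, by the same argument, a further cover relation whose label is at least the label of the edge immediately above, and iterating reaches the unique atom $A(C_{d-1})$ below $C_{d-1}$ and finally $\hat 0$, where $\lambda(\hat 0\lessdot A(C_{d-1}))=(n-1,s+m,0)$ is lexicographically at least every first-clause label $(\ell,h,\cdot)$ occurring above it (all such labels having $\ell\le n-1$ and $h\le s$). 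This exhibits $C_{d-1}\lessdot\cdots\lessdot C_n$ as the upper portion of a maximal decreasing chain of $\Pi_{n,s+1}$.

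The first fact follows from Proposition \ref{hgchc}: $C_d$ is obtained from $C_{d-1}$ by merging precisely the two parts $I_\alpha$ and $I_\beta$, and under the isomorphism of Proposition \ref{hgchc} this corresponds to a cover relation in $\Pi_n$. For the second fact, note that $\{1\},\dots,\{k-1\}$ and $I_{k+1},\dots,I_{n-d+1}$, with all their labels, are the same in $C_{d-1}$ as in $C_d$; hence $A(C_{d-1})$ and $A(C_d)$ can differ only in positions indexed by elements of $I_k$, whose least element is $k=\min I_k$. Reading off atoms by the order-statistic rule, and using the hypothesis (which puts the minimal element of the $h$th label of $I_k$ into the $h$th label of $I_\alpha$ for every $h<i'$ but puts $w_k^i$ into the $i'$th label of $I_\beta$), one sees that the atom value of $k$ agrees in $C_{d-1}$ and $C_d$ in components below $i'$ while it changes in component $i'$; in particular $A(C_{d-1})\neq A(C_d)$, the label is of the first type, and $\hat k=k$, $\hat i\ge i'$.

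For the third fact, observe first that $\hat k\ge k$ always, since every element smaller than $k$ has the same atom value in $C_{d-1}$ and $C_d$. In the second case of the statement the first coordinate of $\lambda(C_d\lessdot C_{d+1})$ is strictly less than $k\le\hat k$, so $\lambda(C_{d-1}\lessdot C_d)>\lambda(C_d\lessdot C_{d+1})$ immediately; and when $d=n$ there is no edge above to compare against. In the first case $\lambda(C_d\lessdot C_{d+1})=(k,i,j)$ with the same $k$, and $\hat k=k$ by the previous paragraph; the hypothesis on the labels of $I_\alpha$ forces the atom value of $k$ to be unchanged in the components below $i'$, so $\hat i\ge i'\ge i$, and if $\hat i=i$ then $\hat j$ is the atom value of $k$ in component $i$ in $A(C_d)$, which by Lemma \ref{htchft}(3) applied to the edge $C_d\lessdot C_{d+1}$ is strictly larger than $j$; so again $\lambda(C_{d-1}\lessdot C_d)>\lambda(C_d\lessdot C_{d+1})$. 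The step I expect to require the most care is the analysis of $(\hat k,\hat i,\hat j)$ feeding the second and third facts: one must argue, from the order-statistic description of the atom map and the precise placement of the distinguished label elements prescribed in the hypothesis, that $A(C_{d-1})\neq A(C_d)$ and that the first position and component at which the two atoms disagree are not lexicographically earlier than $(k,i)$. The remaining work --- the cover relation, the lexicographic comparisons, and the extension of the chain down to $\hat 0$ --- is routine bookkeeping.
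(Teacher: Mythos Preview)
Your argument is correct and follows essentially the same route as the paper's own proof: verify that the new edge $C_{d-1}\lessdot C_d$ has $A(C_{d-1})\neq A(C_d)$, compute that its label is lexicographically at least $\lambda(C_d\lessdot C_{d+1})$, observe that $C_{d-1}$ again has the structural form assumed of $C_d$, and iterate down to an atom and then $\hat 0$. The paper compresses all of this into the sentence ``It is immediate from the definition of $\lambda$ that \ldots the chain is decreasing and $A(C_i)\ne A(C_j)$,'' whereas you actually carry out the case analysis on $(\hat k,\hat i,\hat j)$; your added detail is sound.

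One small slip worth correcting: the iteration does not in general reach $A(C_{d-1})$---there are many atoms below $C_{d-1}$, and $A(C_{d-1})$ is only the lexicographically first one. The process terminates at \emph{some} atom $a$, and your comparison $\lambda(\hat 0\lessdot a)=(n-1,s+m,0)>(\ell,h,\cdot)$ for any first-clause label with $\ell\le n-1$, $h\le s$ works for any such $a$, so the argument is unaffected.
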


\begin{proof}
It is immediate from the definition of $\lambda$ that if $C_{d-1}$ is obtained using these methods, then the chain $C_{d-1} \lessdot \cdots \lessdot C_n$ is decreasing and $A(C_i) \ne A(C_j)$ when $i \ne j$. The element $C_{d-1}$ is of the same form as $C_d$, and so we can continue to extend this chain using this process, and any resulting chain will also have the property that it is decreasing. After performing $d-1$ iterations and concatenating the chain with $\hat{0}$, the resulting chain will be decreasing maximal. \\
\end{proof}

Let $\mathcal{B}_{n}^{s+1}$ denote the set of maximal decreasing chains in $\Pi_{n,s+1}$. For $n>1$, let $\mathcal{B}_n^{s+1,i}$ denote the set of maximal decreasing chains in $\Pi_{n,s+1}$ whose highest edge has label $(k,i,j)$ for some $k,j$. 

\begin{prop}
Any given $C  \in \mathcal{B}_n^{s+1,i}$ is defined uniquely by the following data. Here we index the $s+1$ partitions in any element of $\Pi_{n,s+1}$ by the integers $0$ to $s$:
\begin{itemize}
\item[(1)] The element $C_{n-1}$ in $C$. The element $C_{n-1}$ can be described as a partition into a left and right part, in which the left part is of size $\alpha$, of each set in $C_n = ([n],\ldots,[n])$. For all $h<i$, the $h$th element of $([n],\ldots,[n])$ is partitioned so that the left part contains $1$, and the $i$th element in $([n],\ldots, [n])$ is partitioned so that $1$ is contained in the right part.
\item[(2)] An element $C_L \in \mathcal{B}_\alpha^{s+1,i'}$ for some $2 \le \alpha \le n-1$ and some $i' \ge i$, or by an element $C_L \in \mathcal{B}_\alpha^{s+1}$ where $\alpha=1$. 
\item[(3)] An element $C_R \in \mathcal{B}_{n-\alpha}^{s+1}$.\label{hgchjv}  
\end{itemize}
\end{prop}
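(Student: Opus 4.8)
The plan is to split a decreasing maximal chain $C=C_0\lessdot\cdots\lessdot C_n$ at its top edge $C_{n-1}\lessdot C_n$ and to recognise the portion below $C_{n-1}$ as a shuffle of two strictly smaller decreasing maximal chains, one on each block of the two-block partition underlying $C_{n-1}$. First I would pin down $C_{n-1}$: since $C$ is decreasing, $A(C_{n-1})\ne A(C_n)$ by Proposition~\ref{gfdzxgx}, so the top edge carries a label $(k,i,j)$. Applying Proposition~\ref{mnhv} to it and using $C_n=([n],\dots,[n])$ forces $k=1$ and shows the partition in $C_{n-1}$ is $I_1I_2$ with $1\in I_1$; put $\alpha=|I_1|$, so $1\le\alpha\le n-1$. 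The rest of Proposition~\ref{mnhv} says that the $h$th label of $I_1$ contains $1$ for every labelling index $h<i$, and the $i$th label of $I_2$ contains $1$. Reindexing the partition itself as the $0$th coordinate, this is exactly item~(1), with $I_1$ and $I_2$ the left and right parts of each of the $s+1$ coordinates.

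\textbf{Restriction to the two blocks.} Every $x\le C_{n-1}$ other than $\hat 0$ refines $I_1I_2$ and so is determined by its restrictions to $I_1$ and to $I_2$; the $h$th label of the $I_1$-part is a partition of the fixed set $L^h$ (the $h$th label of $I_1$ in $C_{n-1}$), and the order-preserving bijections $I_1\cong[\alpha]$, $L^h\cong[\alpha]$ turn this restriction into an element of $\Pi_{\alpha,s+1}$; similarly for $I_2$. Each edge $C_d\lessdot C_{d+1}$ with $1\le d\le n-2$ merges two parts of the same block, hence is an edge of one restricted chain; collecting the $I_1$-edges (there are $\alpha-1$ of them), restricting the vertices, relabelling, and prepending $\hat 0$ yields a maximal chain $C_L$ of $\Pi_{\alpha,s+1}$, and symmetrically a maximal chain $C_R$ of $\Pi_{n-\alpha,s+1}$. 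The computational core is the identity $A(x|_{I_1})=\mathrm{std}\big(A(x)\text{ restricted to the positions of }I_1\big)$, the standardisation being by the fixed maps $L^h\cong[\alpha]$; from it one reads off that for an $I_1$-internal edge the lexicographically first coordinate at which the two atoms differ is a position of $I_1$, and that the label $(k,h,j)$ of the edge in $C$ becomes the label $(\rho(k),h,\sigma^h(j))$ of the corresponding edge of $C_L$, with $\rho,\sigma^h$ order-preserving. This substitution fixes the middle coordinate and is order-preserving for the lexicographic order on $\mathbb Z\times\mathbb Z\times\mathbb Z$; since $C$ has no ``same atom'' edge above its bottom edge (Proposition~\ref{gfdzxgx}), neither does $C_L$, and all of its non-bottom labels have first coordinate $\le\alpha-1$. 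Hence $\lambda(C_L)$ is the bottom-edge label $(\alpha-1,s+m,0)$ followed by the (order-preserving, hence weakly decreasing) image of a subsequence of the weakly decreasing word $\lambda(C)$; a quick comparison at the bottom edge shows $\lambda(C_L)$ is weakly decreasing, i.e.\ $C_L\in\mathcal B^{s+1}_\alpha$ (and $C_L\in\mathcal B^{s+1}_1$ if $\alpha=1$), and likewise $C_R\in\mathcal B^{s+1}_{n-\alpha}$. Verifying this standardisation behaviour and the resulting inheritance of ``decreasing'' is the main obstacle.

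\textbf{The index of $C_L$.} When $\alpha\ge 2$ I would argue $i'\ge i$ as follows. All labels of $\lambda(C)$ have first coordinate $\ge 1$, so the edges of $C$ whose label has first coordinate $1$ form a terminal segment of $C$ along which the middle coordinates weakly decrease; the top edge $C_{n-1}\lessdot C_n$ lies in this segment with middle coordinate $i$, so $i$ is the minimum of those middle coordinates. The top edge of $C_L$ is the image of the highest $I_1$-internal edge of $C$, which merges the two parts whose union is all of $I_1$ and hence the part containing $\min I_1=1$; thus that edge has first coordinate $1$ in $C$, so middle coordinate $\ge i$, and by the previous paragraph the top edge of $C_L$ has middle coordinate $i'\ge i$. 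This gives item~(2); item~(3) carries no such constraint since $I_2$ does not contain $1$.

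\textbf{Uniqueness.} Finally I would show the triple recovers $C$. The element $C_{n-1}$ determines $I_1,I_2,\alpha$; the internal edges of $C$ are the images in $\Pi_{n,s+1}$ of the non-bottom edges of $C_L$ and $C_R$, an $I_1$-internal edge having its label's first coordinate in $I_1$ and an $I_2$-internal edge having it in $I_2$. Since $I_1\cap I_2=\varnothing$, the two weakly decreasing families of labels share no value, so the requirement that $\lambda(C)$ be weakly decreasing determines the merge of the two families uniquely, hence the order in which the internal edges occur; that order together with $C_{n-1}$ rebuilds each $C_d$ by gluing the corresponding vertices of $C_L$ and $C_R$, after which the bottom edge $\hat0\lessdot C_1$ and the top edge $C_{n-1}\lessdot C_n$ are forced. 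Thus $C$ is recovered from the triple, as claimed. The remaining bookkeeping—tracking how the index $i$ propagates down $C$, which is where Propositions~\ref{mnhv} and~\ref{yhthch} enter—is routine once the restriction lemma of the second paragraph is in hand.
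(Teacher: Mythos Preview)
Your proposal is correct and follows essentially the same approach as the paper: restrict the chain to each block $I_1,I_2$ of $C_{n-1}$, relabel to obtain $C_L$ and $C_R$, and then argue that the triple $(C_{n-1},C_L,C_R)$ determines $C$. Your standardisation argument makes explicit what the paper leaves implicit, and your justification that $i'\ge i$ (via the terminal segment of edges with first coordinate $1$) spells out what the paper simply asserts by pointing to Propositions~\ref{mnhv} and~\ref{yhthch}.

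The one place you genuinely diverge is in the uniqueness step. The paper reconstructs $C$ top-down: at level $d$ the edge $C_{d-1}\lessdot C_d$ must split the first non-singleton block $I_k$ of $C_d$ (by Proposition~\ref{mnhv}), and whether $I_k\subseteq I_1$ or $I_k\subseteq I_2$ tells you which of $C_L,C_R$ to consult for the split. You instead observe that the $C$-labels of $I_1$-internal edges have first coordinate in $I_1$ and those of $I_2$-internal edges have first coordinate in $I_2$, so the two weakly decreasing label families are disjoint and admit a unique weakly decreasing merge. Both arguments work; yours is slicker and avoids the inductive bookkeeping, while the paper's is closer to the structural characterisation already set up in Propositions~\ref{mnhv} and~\ref{yhthch}.
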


\begin{proof}
To prove this we first use Propositions \ref{gfdzxgx} \ref{mnhv} and \ref{yhthch} to characterise decreasing maximal chains in $\Pi_{n,s+1}$. Together, these propositions imply that any given decreasing maximal chain in $\Pi_{n,s+1}$ can be obtained by starting with the element $C_n = ([n],\ldots,[n])$ and repeatedly applying steps (1) or (2) of Proposition \ref{yhthch}. Also, by Proposition \ref{yhthch}, by repeatedly applying steps (1) or (2), one will obtain a maximal decreasing chain. Therefore, we can characterise maximal decreasing chains of $\Pi_{n,s+1}$ by considering the ways we can apply steps (1) or (2). We will first argue that (1)-(3) partially define elements of $\mathcal{B}_n^{s+1,i}$. We will then argue that (1)-(3) define these elements uniquely. \\

It is clear that point (1) partially describes elements in $\mathcal{B}_n^{s+1,i}$.\\

Suppose $C_{n-1} = (P,w^1,\ldots,w^s)$, where $P = I_1I_2$, and suppose that $|I_1| = \alpha$. Consider all of the parts and labels in elements in $C$ that are subsets of $I_1$ and its labels. Form a new chain $C_L \in \mathcal{B}_\alpha^{s+1}$ as follows. From each element $C_p \in C$ where $p \le n-1$ remove all parts that are subsets of $I_2$ and remove their corresponding labels. Next, relabel the elements in each part and label so that the $\alpha$th greatest element in $I_1$ is replaced with $\alpha$, and the $\alpha$th greatest element in the labels of $I_1$ are replaced with $\alpha$. Now, some of the elements in $\Pi_{\alpha,s+1}$ we obtained will be duplicates. If we remove the duplicates, then the set of elements form a maximal decreasing chain in $\mathcal{B}_{\alpha}^{s+1,i'}$ for some $i' \ge i$, or in $\mathcal{B}_1$ (see Example \ref{leftex}). This is the chain $C_L$ that partially defines $C$ in point (2). \\

Similar to the method used to obtain $C_L$, we obtain $C_R$ by removing parts that contain $I_1$ and their corresponding labels, and we then renumber the elements in each part and label (see Example \ref{leftex}). Thus, $C$ is also defined in part by an element $C_R \in \mathcal{B}_{n-\alpha}^{s+1}$, as described in point (3).\\

We have shown that points (1)-(3) are properties of any chain $C \in \mathcal{B}_n^{s+1,i}$. Now given the data of (1)-(3) for a chain $C$, we will argue that we can construct $C$ uniquely from this data. We prove this by induction. Assume that we are able to deduce the elements $C_n,C_{n-1},\ldots, C_d$ uniquely for some $d \le n-1$. Recall that $C_{n-1}$ has partition $I_1I_2$. Assume that $C_d = (P,w^1,\ldots, w^s)$, where $P=\{1\},\ldots,\{k-1\}I_k,\ldots,I_{}$, where $I_k$ is a non singleton set. Now the part $I_k$ is contained in either $C_L$ or $C_R$, and this can be deduced by considering whether the elements in $I_k$ are in $I_1$ or $I_2$ of $C_{n-1}$. We partition the set $I_k$ and its corresponding labels in the same manner that they are partitioned in the corresponding chain $C_L$ or $C_R$. This determines $C_{d-1}$ uniquely, and therefore, by induction, the entire chain $C$ can be determined uniquely.
\end{proof}

\begin{example}
Suppose $C$ is the maximal decreasing chain from Example \ref{chainex}. We rewrite it here for convenience. 
$$C_5=([5],[5],[5])$$
$$C_4=(\{1,4,5\}\{2,3\},\{2,3,5\}\{1,4\},\{3,4,5\}\{1,2\})$$
$$C_3 =(\{1\}\{2,3\}\{4,5\},\{2\}\{1,4\}\{3,5\},\{4\}\{1,2\}\{3,5\})$$
$$C_2 = (\{1\}\{2\}\{3\}\{4,5\},\{2\}\{4\}\{1\}\{3,5\},\{4\}\{1\}\{2\}\{3,5\})$$
$$C_1= (\{1\}\{2\}\{3\}\{4\}\{5\},\{2\}\{4\}\{1\}\{3\}\{5\},\{4\}\{1\}\{2\}\{5\}\{3\} )$$
$$C_0 = \hat{0}.$$

Here $I_1=\{1,4,5\}$ and $I_2 =\{2,3\}$. After removing subsets of $I_2$ and their corresponding labels, $C_3$, for example, becomes 
$$(\{1\}\{4,5\},\{2\}\{3,5\},\{4\}\{3,5\}),$$ and under the relabeling this becomes $$(\{1\}\{2,3\},\{1\}\{2,3\},\{2\}\{1,3\}).$$

Then $C_L$ is the chain $$\hat{0} \lessdot (\{1\}\{2\}\{3\},\{1\}\{2\}\{3\},\{2\}\{3\}\{1\})\lessdot (\{1\}\{2,3\},\{1\}\{2,3\},\{2\}\{1,3\})\lessdot ([3],[3],[3]),$$ and $C_R$ is the chain $$\hat{0} \lessdot (\{1\}\{2\},\{2\}\{1\},\{1\}\{2\}) \lessdot ([2],[2],[2]).$$\label{leftex}
\end{example}

Using Proposition \ref{hgchjv} we are able to count the number of maximal decreasing chains in $\Pi_{n,s+1}$. We do not find an exact solution for any $n$ and $s$, but we do find a recurrence relation which simplifies considerably when $s=1$. In the $s=1$ case, a solution to the recurrence is not known, but we show that the recurrence is the same as the recurrence found in \cite{abb} which counts the number of complete non-ambiguous trees. \\

\begin{thm}
For any $n \ge 2$, any $s\ge 1$, and any $1 \le i \le s$ the following recursion holds
\begin{equation}|\mathcal{B}_n^{s+1,i}| = \sum_{1 \le \alpha \le n-1} \sum_{i' \ge i}| \mathcal{B}_\alpha^{s+1,i'}||\mathcal{B}_{n-\alpha}^{s+1}|\binom{n-1}{\alpha-1}^{i}\binom{n-1}{\alpha}\binom{n}{\alpha}^{s-i},\label{recursion}\end{equation}

where, $|\mathcal{B}_{1}^{s+1}|=1$ for all $s$. When $\alpha=1$ in Equation \ref{recursion}, we let $\mathcal{B}_{\alpha}^{s+1,i'}=\mathcal{B}_{\alpha}^{s+1}$, and there is no summation over $i'$. \label{yhtgch}
\end{thm}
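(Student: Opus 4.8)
The plan is to derive the recurrence directly from the bijective decomposition established in Proposition \ref{hgchjv}, which says that a chain $C \in \mathcal{B}_n^{s+1,i}$ is determined uniquely by the triple $(C_{n-1}, C_L, C_R)$, where $C_{n-1}$ records a choice of ``left part'' of size $\alpha$ in each of the $s+1$ sets of $C_n = ([n],\ldots,[n])$, and $C_L \in \mathcal{B}_\alpha^{s+1,i'}$ for some $i' \ge i$ (or $C_L \in \mathcal{B}_1^{s+1}$ when $\alpha = 1$) and $C_R \in \mathcal{B}_{n-\alpha}^{s+1}$. So the right-hand side of \eqref{recursion} should be read as: sum over the size $\alpha$ of the left block, over the index $i'$, the product $|\mathcal{B}_\alpha^{s+1,i'}|\,|\mathcal{B}_{n-\alpha}^{s+1}|$ of the number of choices for $C_L$ and $C_R$, times the number of ways to choose $C_{n-1}$ compatibly, which is the binomial product $\binom{n-1}{\alpha-1}^{i}\binom{n-1}{\alpha}\binom{n}{\alpha}^{s-i}$.

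The heart of the argument is thus to count, for fixed $\alpha$ and fixed $C_L, C_R$, the number of valid $C_{n-1}$, i.e.\ the number of ways to split each of the $s+1$ sets $[n]$ into an ordered pair (left block, right block) with the left block of size $\alpha$, subject to the constraints imposed by Propositions \ref{mnhv} and \ref{yhthch} on a decreasing chain whose top edge has label $(k,i,j)$. I would go through the $s+1$ coordinates (indexed $0$ to $s$) and count the legal splittings of each: for the coordinates $h$ with $h < i$ (there are $i$ of them, namely indices $0,1,\ldots,i-1$ in the indexing where the label index is $i$; one must be careful here about whether coordinate $0$ is the underlying partition $P$ or the first labeling — this bookkeeping is exactly what produces the exponent $i$ rather than $i-1$ or $i+1$), the minimal element $1$ must lie in the left block, so the left block is $1$ together with any $\alpha - 1$ of the remaining $n-1$ elements, giving $\binom{n-1}{\alpha-1}$ choices each, hence $\binom{n-1}{\alpha-1}^{i}$; for the $i$th coordinate, $1$ must lie in the right block, so the left block is any $\alpha$-subset of $[n]\setminus\{1\}$, giving $\binom{n-1}{\alpha}$ choices; and for each of the remaining $s-i$ coordinates there is no constraint involving $1$, so the left block is an arbitrary $\alpha$-subset of $[n]$, giving $\binom{n}{\alpha}$ choices each, hence $\binom{n}{\alpha}^{s-i}$. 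Multiplying these gives the claimed binomial factor. Once $C_{n-1}$ is chosen, the relabelings in the proof of Proposition \ref{hgchjv} identify the parts lying in the left block with a chain in $\Pi_{\alpha,s+1}$ and those in the right block with a chain in $\Pi_{n-\alpha,s+1}$, and the index $i'$ of the top edge of the left-hand chain may be any value $\ge i$ by the first bullet of Proposition \ref{yhthch}, while the right-hand chain is unconstrained; summing over $\alpha$ and $i'$ and invoking the uniqueness half of Proposition \ref{hgchjv} yields exactly \eqref{recursion}. The base case $|\mathcal{B}_1^{s+1}| = 1$ is immediate since $\Pi_{1,s+1}$ has the single maximal chain $\hat 0 \lessdot ([1],\ldots,[1])$, and the degenerate $\alpha = 1$ convention (no sum over $i'$, $\mathcal{B}_1^{s+1,i'} := \mathcal{B}_1^{s+1}$) is forced because a length-zero ``chain'' on a $1$-element ground set has no top edge to label.

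I expect the main obstacle to be the off-by-one bookkeeping in the exponents: Proposition \ref{hgchjv} indexes the $s+1$ partitions $0$ through $s$, with partition $0$ presumably the underlying set partition $P$ (which is unlabeled in the sense that its ``label'' is itself) and partitions $1,\ldots,s$ the labelings $w^1,\ldots,w^s$, yet the label index in $(k,i,j)$ runs over $1,\ldots,s$. Pinning down precisely which coordinates are forced to put $1$ on the left (those ``before'' the active coordinate $i$, including $P$ itself) versus which put $1$ on the right (the active coordinate) versus which are free (those ``after''), and checking this against the worked Examples \ref{chainex} and \ref{leftex} to confirm the counts $\binom{n-1}{\alpha-1}^{i}$, $\binom{n-1}{\alpha}$, $\binom{n}{\alpha}^{s-i}$, is the delicate part; everything else is a direct translation of the already-proven structural propositions into a counting identity.
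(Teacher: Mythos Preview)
Your proposal is correct and follows essentially the same approach as the paper: invoke the bijective decomposition of Proposition~\ref{hgchjv} into $(C_{n-1},C_L,C_R)$, then count the choices for $C_{n-1}$ coordinate-by-coordinate to produce the binomial factor $\binom{n-1}{\alpha-1}^{i}\binom{n-1}{\alpha}\binom{n}{\alpha}^{s-i}$ and the choices for $C_L,C_R$ to produce $|\mathcal{B}_\alpha^{s+1,i'}||\mathcal{B}_{n-\alpha}^{s+1}|$. Your worry about the off-by-one bookkeeping is resolved exactly as you surmise---coordinate $0$ is the underlying partition $P$, and the $i$ constrained coordinates with $1$ on the left are indices $0,\ldots,i-1$---so the exponents come out as stated.
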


\begin{proof}
We have characterised decreasing maximal chains in Proposition \ref{hgchjv}.\\

The index $\alpha$ in Equation \ref{recursion} is the size of the left parts in point (1). If the left parts are of size $\alpha$, then there are $\binom{n-1}{\alpha-1}^{i}$ choices for the partitions the parts indexed from $0$ to $i-1$, since the left parts all contain $1$, leaving a choice of $\alpha-1$ from $n-1$ remaining elements in the left parts. There are $\binom{n-1}{\alpha}$ ways to choose elements for the left part of the $i$th label in $C_n=([n],\ldots,[n])$, and there are $\binom{n}{\alpha}^{s-i}$ ways to partition the remaining labels in $C_n=([n],\ldots,[n])$. These choices contribute the $\binom{n-1}{\alpha-1}^{i}\binom{n-1}{\alpha}\binom{n}{\alpha}^{s-i}$ terms in Equation \ref{recursion}.\\

 For conditions (2) and (3), the number of choices for $C_L$ and $C_R$ for any particular $\alpha$ and label $i' \ge i$ of the $C_L$, is given by $|\mathcal{B}_{\alpha}^{s+1,i'}||\mathcal{B}^{s+1}_{n-\alpha}|$. 

\end{proof}

We will now show that the set of maximal decreasing chains in $\Pi_{n,2}$ is in one to one correspondence with the set of complete non-ambiguous trees with $n$ leaves, which are defined and studied in \cite{abb}. Theorem \ref{yhtgch} shows that when $s=1$, the number of maximal decreasing chains in $\Pi_{n,2}$ satisfies the following recurrence: \\

\begin{equation}|\mathcal{B}_{n}^2| = \sum_{1 \le \alpha \le n-1}\binom{n-1}{\alpha-1}\binom{n-1}{\alpha}|\mathcal{B}_\alpha^2||\mathcal{B}_{n-\alpha}^2|, \label{jhvkkjb}\end{equation} 
with $\mathcal{B}_1^2=1$.\\

In \cite{abb}, the number of non-ambiguous trees with $n$ internal vertices is denoted by $b_n$ and is shown to satisfy the recurrence 

\begin{equation}b_{n+1} = \sum_{i+j=n}\binom{n+1}{i}\binom{n+1}{j}b_ib_j, \label{htyjhgc} \end{equation} for $n \ge 0$, with $b_0=1$. 

\begin{corol}
The order complex $\Delta(\Pi_{n,2})$ is homotopy equivalent to a wedge of $b_{n-1}$ $(n-2)$-spheres, where $b_n$ is the number of complete non-ambiguous trees with $n$ internal vertices.  
\end{corol}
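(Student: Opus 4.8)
The plan is to compare the two recurrences directly and conclude via induction. The corollary asserts that the number of $(n-2)$-spheres in $\Delta(\overline{\Pi}_{n,2})$ equals $b_{n-1}$. By Theorem \ref{kjvgm} together with the fact that all maximal chains of $\Pi_{n,2}$ have length $n$ (so that decreasing maximal chains have length $n = (n-2)+2$), the number of $(n-2)$-spheres in $\Delta(\overline{\Pi}_{n,2})$ equals $|\mathcal{B}_n^2|$, the number of decreasing maximal chains in $\Pi_{n,2}$. So it suffices to show $|\mathcal{B}_n^2| = b_{n-1}$ for all $n \ge 1$.

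First I would record the two recurrences side by side. From Equation \ref{jhvkkjb}, $|\mathcal{B}_n^2| = \sum_{1\le\alpha\le n-1}\binom{n-1}{\alpha-1}\binom{n-1}{\alpha}|\mathcal{B}_\alpha^2||\mathcal{B}_{n-\alpha}^2|$ with $|\mathcal{B}_1^2|=1$; this is the $s=1$ specialization of Theorem \ref{yhtgch} (with only the trivial choice $i=i'=1$, so the $i'$ sum collapses, the $\binom{n-1}{\alpha-1}^i$ term becomes $\binom{n-1}{\alpha-1}$, the $\binom{n}{\alpha}^{s-i}$ term becomes $\binom{n}{\alpha}^0=1$, and $\binom{n-1}{\alpha}$ remains). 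From Equation \ref{htyjhgc}, $b_{m+1} = \sum_{i+j=m}\binom{m+1}{i}\binom{m+1}{j}b_ib_j$ for $m\ge 0$, with $b_0 = 1$. Then I would substitute $n = m+1$, i.e. set $m = n-1$, and reindex: writing $i = \alpha - 1$ and $j = (n-1)-\alpha = (n-\alpha)-1$ so that $i+j = n-2 = m-1$... here I need to be careful, since $i+j=m=n-1$ in Stanley/ABB's indexing. Let me instead substitute directly: $b_n = \sum_{i+j=n-1}\binom{n}{i}\binom{n}{j}b_ib_j$. Setting $\alpha = i+1$ (so $\alpha$ ranges over $1,\dots,n$ as $i$ ranges over $0,\dots,n-1$) gives $j = n-1-i = n-\alpha$, and $\binom{n}{i} = \binom{n}{\alpha-1}$, $\binom{n}{j} = \binom{n}{n-\alpha} = \binom{n}{\alpha}$, so $b_n = \sum_{1\le\alpha\le n}\binom{n}{\alpha-1}\binom{n}{\alpha}b_{\alpha-1}b_{n-\alpha}$. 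Comparing with $|\mathcal{B}_{n+1}^2| = \sum_{1\le\alpha\le n}\binom{n}{\alpha-1}\binom{n}{\alpha}|\mathcal{B}_\alpha^2||\mathcal{B}_{n+1-\alpha}^2|$, I see that the substitution $|\mathcal{B}_n^2| = b_{n-1}$ makes the two recurrences coincide term by term: $b_{\alpha-1} = |\mathcal{B}_\alpha^2|$ and $b_{n-\alpha} = |\mathcal{B}_{n+1-\alpha}^2|$.

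With that alignment in hand, the proof is a routine induction on $n$. The base case is $|\mathcal{B}_1^2| = 1 = b_0$. For the inductive step, assume $|\mathcal{B}_k^2| = b_{k-1}$ for all $k < n$; then applying Equation \ref{jhvkkjb}, the inductive hypothesis to each factor $|\mathcal{B}_\alpha^2|$ and $|\mathcal{B}_{n-\alpha}^2|$ (both indices are strictly less than $n$ since $1\le\alpha\le n-1$), and the reindexed form of Equation \ref{htyjhgc} computed above, yields $|\mathcal{B}_n^2| = b_{n-1}$. Finally, combining $|\mathcal{B}_n^2| = b_{n-1}$ with the earlier cited consequence of Theorems \ref{kjvgm} and \ref{kjgkjbkb} and Proposition \ref{htgdyh} gives the statement of the corollary.

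I do not anticipate a genuine obstacle here; the only thing requiring care is the bookkeeping of the index shift between the $\mathcal{B}$-indexing (where $\mathcal{B}_n^2$ corresponds to $\Pi_{n,2}$ and a maximal chain of length $n$) and the $b$-indexing (where $b_m$ counts trees with $m$ internal vertices), to make sure the "off by one" is handled consistently — i.e. that it really is $b_{n-1}$ and not $b_n$ or $b_{n-2}$. Checking the recurrences match term-by-term after the substitution $|\mathcal{B}_n^2| \leftrightarrow b_{n-1}$, as above, pins this down, and one small numerical sanity check (e.g. $|\mathcal{B}_2^2| = \binom{1}{0}\binom{1}{1}|\mathcal{B}_1^2|^2 = 1 = b_1$, and $|\mathcal{B}_3^2| = \binom{2}{0}\binom{2}{1}\cdot 1\cdot 1 + \binom{2}{1}\binom{2}{2}\cdot 1\cdot 1 = 2+2 = 4 = b_2$) confirms the indexing is correct.
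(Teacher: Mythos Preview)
Your proposal is correct and follows exactly the paper's own approach: the paper's proof is the single line ``By equations \ref{jhvkkjb} and \ref{htyjhgc} it follows that $b_{n-1} = |\mathcal{B}_{n}^2|$ for all $n \ge 1$,'' and what you have written is precisely the index-matching and induction that unpacks that sentence. Your reindexing and sanity checks are accurate, so there is nothing to add.
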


\begin{proof}
By equations \ref{jhvkkjb} and \ref{htyjhgc} it follows that $b_{n-1} = |\mathcal{B}_{n}^2|$ for all $n \ge 1$.
\end{proof}

\end{document}